\newtheorem{theorem}{Theorem}[section]
\newtheorem{proposition}[theorem]{Proposition}
\newtheorem{lemma}[theorem]{Lemma}
\newtheorem{corollary}[theorem]{Corollary}
\newtheorem{assumption}[theorem]{Assumption}
\theoremstyle{definition}
\newtheorem{example}[theorem]{Example}
\theoremstyle{remark} \newtheorem{remark}[theorem]{Remark}
\numberwithin{equation}{section}
\numberwithin{figure}{section}
\numberwithin{algorithm}{section}
\definecolor{darkgreen}{rgb}{0.0, 0.5, 0.0}
\newcommand{\Ltwo}{{L^2}}
\newcommand{\Linfty}{{L^{\infty}}}
\newcommand{\LtwoSd}{{L^2(\Sd)}}
\newcommand{\LtwoSone}{{L^2(\Sone)}}
\newcommand{\LtwoBR}{{L^2(B_R(0))}}
\newcommand{\LtwoD}{{L^2(D)}}
\newcommand{\LinftyRd}{{\Linfty(\Rd)}}
\newcommand{\LinftyD}{{\Linfty(D)}}
\newcommand{\HtwolocRd}{H^2_\loc(\Rd)}
\newcommand{\field}[1]{{\mathbb{#1}}}
\newcommand{\C}{\field{C}}
\newcommand{\N}{\field{N}}
\newcommand{\R}{\field{R}}
\newcommand{\Dcal}{\mathcal{D}}
\newcommand{\Fcal}{\mathcal{F}}
\newcommand{\Hcal}{\mathcal{H}}
\newcommand{\Ncal}{\mathcal{N}}
\newcommand{\Pcal}{\mathcal{P}}
\newcommand{\Rcal}{\mathcal{R}}
\newcommand{\Tcal}{\mathcal{T}}
\newcommand{\Vcal}{\mathcal{V}}
\newcommand{\Wcal}{\mathcal{W}}
\newcommand{\loc}{{\mathrm{loc}}}
\newcommand{\ol}[1]{\overline{#1}}
\newcommand{\Llra}{\Longleftrightarrow}
\newcommand{\tm}{\subseteq} 
\newcommand{\di}{\partial}
\newcommand{\trans}{{\top}}
\newcommand{\ds}{\, \dif s}
\newcommand{\dx}{\, \dif x}
\newcommand{\dy}{\, \dif y}
\newcommand{\ghat}{\widehat g}
\newcommand{\xhat}{\widehat{x}}
\newcommand{\gtilde}{{\widetilde g}}
\newcommand{\qtilde}{{\widetilde q}}
\newcommand{\vtilde}{{\widetilde v}}
\newcommand{\Ctilde}{{\widetilde C}}
\newcommand{\Vtilde}{{\widetilde V}}
\newcommand{\Vcaltilde}{{\widetilde \Vcal}}
\newcommand{\rhotilde}{{\widetilde\rho}}
\newcommand{\rmi}{\mathrm{i}} 
\newcommand{\eps}{\varepsilon}
\newcommand{\uinfty}{u^\infty}
\newcommand{\vinfty}{v^\infty}
\newcommand{\Sone}{{S^1}}
\newcommand{\Sd}{{S^{d-1}}}
\newcommand{\Rd}{{\R^d}}
\newcommand{\Rtwo}{{\R^2}}
\newcommand{\BR}{{B_R(0)}}
\newcommand{\ui}{u^i}
\newcommand{\us}{u^s}
\newcommand{\ph}{\,\cdot\,}
\newcommand{\skp}[2]{\langle#1,#2\rangle}
\newcommand{\omegad}{\omega_{d-1}}
\newcommand{\qmin}{q_{0,\mathrm{min}}}
\newcommand{\qmax}{q_{0,\mathrm{max}}}
\newcommand{\Vcalperp}{{\Vcal^\perp}}
\newcommand{\Wcalperp}{{\Wcal^\perp}}
\newcommand{\zij}{z_{ij}}
\newcommand{\Ifac}{I_{\mathrm{fac}}}
\newcommand{\Imon}{I_{\mathrm{mon}}}
\newcommand{\eins}{\mathbf{1}}
\DeclareMathAlphabet{\mathbi}{\encodingdefault}{\rmdefault}{\bfdefault}{\itdefault}
\DeclareMathOperator{\dif}{d\!}  
\DeclareMathOperator{\spann}{span}
\DeclareMathOperator{\supp}{supp}
\DeclareMathOperator{\real}{Re}
\DeclareMathOperator{\argmin}{argmin}
\DeclareMathOperator*{\essinf}{ess\,inf}
\begin{document}

\title{Inverse medium scattering for a nonlinear Helmholtz equation} 
\author{Roland Griesmaier\footnote{Institut f\"ur
    Angewandte und Numerische Mathematik, 
    Karlsruher Institut f\"ur Technologie, Englerstr.~2,
    76131 Karlsruhe, Germany ({\tt roland.griesmaier@kit.edu},
    {\tt marvin.knoeller@kit.edu})}\,, 
  Marvin Kn\"oller\footnotemark[1]\,,
  and Rainer Mandel\footnote{Institut f\"ur Analysis, 
    Karlsruher Institut f\"ur Technologie, Englerstr.~2,
    76131 Karlsruhe, Germany ({\tt rainer.mandel@kit.edu}).}
}
\date{\today}

\maketitle

\begin{abstract}
  We discuss a time-harmonic inverse scattering problem for a
  nonlinear Helmholtz equation with compactly supported inhomogeneous
  scattering objects that are described by a nonlinear refractive
  index in unbounded free space. 
  Assuming the knowledge of a nonlinear far field operator, which maps
  Herglotz incident waves to the far field patterns
  of corresponding solutions of the nonlinear scattering problem, we
  show that the nonlinear index of refraction is uniquely determined. 
  We also generalize two reconstruction methods, a factorization
  method and a monotonicity method, to recover the support of such
  nonlinear scattering objects. 
  Numerical results illustrate our theoretical findings. 
\end{abstract}

{\small\noindent
  Mathematics subject classifications (MSC2010): 35R30, (65N21)
  \\\noindent 
  Keywords: Inverse scattering, nonlinear Helmholtz equation,
  uniqueness, factorization method, monotonicity method
  \\\noindent
  Short title: Nonlinear inverse medium scattering
}

\section{Introduction}
\label{sec:Introduction}
The linear Helmholtz equation is used to model the propagation
of sound waves or electromagnetic waves of small amplitude in
inhomogeneous isotropic media in the time-harmonic regime (see, e.g.,
\cite{ColKre19}). 
However, if the magnitudes are large, then intensity-dependent
material laws might be required, and nonlinear Helmholtz equations
are often more appropriate.
A prominent example are Kerr-type nonlinear media (see,
e.g.,~\cite{Boy08,MolNew04} for the physical background). 
Optical Kerr effects are studied in various applications from laser
optics (see, e.g., \cite{AdaChaPay89,Che16}) both from a theoretical
and applied point of view. 
In this theoretical study we consider an inverse medium scattering
problem for a class of nonlinear Helmholtz equations that covers for
instance generalized Kerr-type nonlinear media of
arbitrary order.

To begin with, we discuss the well-posedness of the direct scattering 
problem. 
We consider compactly supported scatterers that are described by a
nonlinear refractive index, which we basically assume to be well
approximated by a linear refractive index at low intensities. 
Rewriting the scattering problem in terms of a nonlinear
Lippmann-Schwinger equation we use a contraction argument together
with resolvent estimates for the linearized problem to establish the 
existence and uniqueness of solutions for incident waves that are
sufficiently small relative to the size of the nonlinearity.
Here it is important to note that the parameters in nonlinear
material laws are usually extremely small (see, e.g.,
\cite[p.~212]{Boy08}), which means that this assumption does not
rule out incident fields of rather large intensity.
As a byproduct we also give a priori estimates for the solution of
the nonlinear scattering problem as well as estimates for the
linearization error, which are instrumental for the rest of the work.
The main reason for considering incident waves that are small relative
to the size of the nonlinearity here is that we later use
linearization techniques to solve the corresponding inverse problem. 
However, we note that a more general existence result for
the direct scattering problem that avoids any smallness assumption on
the incident field has recently been established in \cite{CheEveWet21}
(see also \cite{Gut04,Man19}).

We define a nonlinear far field operator that maps
densities of Herglotz incident fields to the far field patterns
of the corresponding solutions of the direct scattering problem.
In the linear case such far field operators are used to describe the
scattering process for infinitely many incident fields, and their
properties have been widely studied (see, e.g., \cite{ColKre19}). 
Similar to \cite{Lec11} (see also \cite{KirGri08} for the linear case)
we derive a factorization of this operator into three simpler
operators.
Here it is important to note that only the second operator in this
factorization is nonlinear.
We derive estimates for the corresponding linearization error. 

Restricting the discussion to a class of generalized Kerr-type
nonlinearities of arbitrary order, we then turn to the associated
inverse scattering problem. 
We show that the knowledge of the nonlinear far field operator
uniquely determines the nonlinear refractive index.
This generalizes earlier results for the inverse medium scattering
problem for nonlinear Helmholtz equations from \cite{Fur20,Jal04}.
In comparison to these works we consider a less regular and more
general class of nonlinear refractive indices.
Our proof relies on linearization to determine the terms in the
generalized Kerr-type nonlinearity recursively, and it uses the
classical uniqueness result for the corresponding linear inverse
medium scattering problem (see, e.g.,
\cite{Buk08,Nac88,Nov88,Ram88}). 
Recently, a uniqueness proof that avoids the use of the linear result
has been established for a more regular class of power-type
nonlinearities than considered here in
\cite{FeiOks20,LasLiiLinSal21,HarLin22}. 
Earlier uniqueness results for semilinear elliptic inverse problems
have, e.g., been obtained in \cite{ImaYam13,IsaNac95,IsaSyl94,Sun10}.
Furthermore, inverse scattering problems for nonlinear Schr\"odinger
equations, which are closely related to the nonlinear Helmholtz
equations considered in this work, have been studied using different
techniques than those applied in this work in
\cite{HarSer14,Ser08,Ser12,SerHar08,SerHarFot12,SerSan10}. 

We also generalize two popular methods for shape reconstruction for
inverse scattering problems, the factorization method and the
monotonicity method, to the nonlinear scattering problem.
A related factorization method has been discussed in~\cite{Lec11} for
a class of weakly scattering objects and for scattering objects with
small nonlinearity of linear growth.
In comparison to this work we consider a larger class of
nonlinearities without any smallness assumption on the nonlinearity,
but on the other hand we assume that the incident fields are
sufficiently small relative to the size of the nonlinearity. 
For linear scattering problems the factorization method has originally
been developed in \cite{Kir98,Kir99,Kir00} (see also \cite{ColKir96}
and the monographs \cite{CakCol14,KirGri08}). 
Using estimates for the linearization error we show that the
inf-criterion from \cite{Kir00} can be extended to the nonlinear
case considered in this work.
However, since the far field operator is nonlinear, the efficient
numerical implementation of this criterion using spectral theory that
is used for the linear scattering problem no longer applies.
Instead we have to solve a nonlinear constrained optimization problem
for each sampling point to decide whether it belongs to the support of
the nonlinear scatterer or not. 
This leads to a numerical scheme that is considerably more time
consuming than the traditional scheme for the linear case.

The situation is similar for the nonlinear monotonicity method.
For linear scattering problems monotonicity based reconstruction
methods have been proposed in
\cite{AlbGri20,GriHar18,HarPohSal19b,HarPohSal19a}.
Using linearization techniques we show that the method can be extended
to the nonlinear case considered in this work.
Again the tools from spectral theory that have been used for the
numerical implementation of the monotonicity criteria in
\cite{AlbGri20,GriHar18} are not available for the nonlinear
scattering problem.
However, we show that there is a close connection between the
nonlinear monotonicity based shape characterization and the
inf-criterion for the nonlinear factorization method, which we exploit
to implement the nonlinear monotoncity based reconstruction method in
terms of a similar constrained optimization problem as for the
nonlinear factorization method.

We consider a numerical example with a scattering object that is
described by a third-order nonlinear refractive index using optical
coefficients for glass from \cite{Boy08}.
Since the nonlinear part of the refractive index is extremely small,
we work with incident fields of very high intensity such that there
is a significant nonlinear contribution in the scattered field. 
The forward solver, which is based on the same fixed point
iteration for the nonlinear Lippmann-Schwinger equation that we use
to analyze the direct scattering problem, as well as the
reconstruction methods work well.
This suggests that the smallness assumptions on the intensity of the
incident fields that we have to make in our theoretical results is not
too restrictive. 

The article is organized as follows.
In Section~\ref{sec:Setting} we introduce the nonlinear scattering
problem, and we establish existence and uniqueness of solutions for
the direct scattering problem.
In Section~\ref{sec:InverseProblem} we turn to the inverse scattering
problem to recover the nonlinear refractive index from observations of
the corresponding nonlinear far field operator.
Focusing on a class of generalized Kerr-type nonlinearities, we show
that this inverse problem has a unique solution. 
In Sections~\ref{sec:FactorizationMethod} and
\ref{sec:MonotonicityMethod} we derive and analyze a nonlinear
factorization method and a nonlinear monotonicity method for
reconstructing the support of nonlinear scatterers. 
In Section~\ref{sec:NumericalExamples} we provide numerical examples.

\section{The nonlinear scattering problem}
\label{sec:Setting}
The nonlinear wave equation 
\begin{equation*}
  \frac{\di^2\psi}{\di t^2}(t,x) - \Delta \psi(t,x) 
  \,=\, h(x,\psi(t,x)) \,, 
  \qquad (t,x) \in \R\times\Rd \,,
\end{equation*}
is used to model the interaction of acoustic or electromagnetic waves 
with a compactly supported inhomogeneous penetrable scattering object
with nonlinear response in $d$-dimensional free space for $d=2,3$.  
In the following we restrict the discussion to nonlinearities of the
form 
\begin{equation*}
  h(x,\psi(t,x)) 
  \,=\, k^2 q(x,|\psi(t,x)|)\psi(t,x) \,, 
  \qquad (t,x) \in \R\times\Rd \,,
\end{equation*}
where $q: \Rd\times\R\to\R$ is real-valued.
Specifying a \emph{wave number} $k>0$, the time-periodic ansatz 
\begin{equation*}
  \psi(x,t) 
  \,=\, e^{-\rmi k t} u(x) \,, \qquad (x,t) \in \Rd\times\R \,,
\end{equation*}
gives the nonlinear Helmholtz equation
\begin{equation*}
  \Delta u + k^2 u 
  \,=\, - k^2 q(x,|u|) u \,, \qquad x\in\Rd \,.
\end{equation*}
Denoting by $n^2 := 1+q$ the associated nonlinear 
\emph{refractive index}, we make the following general assumptions
throughout this work. 

\begin{assumption}
  \label{ass:Index1}
  The nonlinear contrast function $q\in\Linfty(\Rd\times\R)$ shall satisfy 
  \begin{itemize}
  \item[(i)] $\supp(q) \tm \overline D\times\R$ for some bounded open
    set $D\subset\Rd$, 
  \item[(ii)] $q(x,0)=0$ for a.e.\ $x\in\Rd$,
  \item[(iii)] and there exist $q_0\in\LinftyRd$ with 
    $\essinf q_0>-1$ and $\supp(q_0)\tm \ol{D}$,
    and a parameter~$\alpha>0$ such that for
    any $z_1,z_2\in\C$ with $|z_1|,|z_2|\leq 1$, 
    \begin{equation}
      \label{eq:Assumption3}
      \bigl\|q(\ph,|z_1|)z_1-q(\ph,|z_2|)z_2-q_0(z_1-z_2)\bigr\|_{\LinftyRd}
      \,\leq\, C_q (|z_1|^\alpha + |z_2|^\alpha) |z_1-z_2| \,.
    \end{equation}
  \end{itemize}
\end{assumption}

For later reference we note that \eqref{eq:Assumption3} implies 
\begin{equation}
  \label{eq:Assumption3a}
  \bigl\|q(\ph,|z|)z-q_0 z\bigr\|_{\LinftyRd} 
  \,\leq\, C_q |z|^{1+\alpha} \qquad 
  \text{for any } z\in\C \,,\; |z|\leq 1 \,.
\end{equation}

\begin{example}
  \label{exa:generalizedKerr}
  An example for a nonlinear material law that satisfies 
  Assumption~\ref{ass:Index1} is the
  \emph{generalized Kerr-type material law} 
  \begin{equation}
    \label{eq:generalizedKerr}
    q(x,|z|) 
    \,=\, q_0(x) + \sum_{l=1}^L q_l(x)|z|^{\alpha_l} 
    \qquad x\in\Rd \,,\; z\in\C \,,
  \end{equation} 
  for $q_0,\ldots,q_L\in \LinftyRd$ with support in $\ol{D}$, where
  the lowest order term satisfies $\essinf q_0>-1$, and the 
  exponents fulfill $0<\alpha_1<\cdots<\alpha_L<\infty$. 
  In this case condition~\eqref{eq:Assumption3} is satisfied for 
  $\alpha=\alpha_1$ and $C_q=\sum_{l=1}^L\|q_l\|_\LinftyD$.
  For the special case when~$L=1$ and $\alpha_1=2$ this gives the
  well-known Kerr nonlinearity (see, e.g., \cite{Boy08,MolNew04}).
  \hfill$\lozenge$ 
\end{example}

We suppose that the wave motion is caused by an \emph{incident field}
$\ui$ satisfying the linear Helmholtz equation 
\begin{subequations}
  \label{eq:ScatteringProblem}
  \begin{equation}
    \label{eq:uIncident}
    \Delta \ui + k^2 \ui \,=\, 0 \qquad \text{in } \Rd \,.
  \end{equation}
  The scattering problem that we consider consists in determining the
  \emph{total field} $u=\ui+\us$ such that 
  \begin{equation}
    \label{eq:uTotal}
    \Delta u + k^2 n^2(\ph,|u|) u
    \,=\, 0 \qquad \text{in } \Rd \,,
  \end{equation}
  where the \emph{scattered field} $\us$ satisfies the
  \emph{Sommerfeld radiation condition} 
  \begin{equation}
    \label{eq:Sommerfeld}
    \lim_{r\to\infty} r^{\frac{d-1}{2}} \Bigl(
    \frac{\di\us}{\di r}(x) - \rmi k \us(x) \Bigr)
    \,=\, 0 \,, \qquad r=|x| \,,
  \end{equation}
\end{subequations}
uniformly with respect to all directions $x/|x|\in\Sd$.

\begin{remark}
  Throughout this work (nonlinear) Helmholtz equations are to be
  understood in the strong sense.
  For instance, $u\in\HtwolocRd$ is a solution to \eqref{eq:uTotal}
  if and only if it satisfies the equation weakly almost everywhere in
  $\Rd$. 
  Elliptic regularity results show that $\ui$ is smooth throughout
  $\Rd$, and that $u$ and thus also $\us$ are smooth in 
  $\Rd\setminus\ol{D}$. 
  In particular the radiation condition \eqref{eq:Sommerfeld} is
  well-defined. 
  As usual we call a solution to a (nonlinear) Helmholtz equation on
  an unbounded domain that satisfies the Sommerfeld radiation
  condition a \emph{radiating solution}.~\hfill$\lozenge$
\end{remark}

Next we show that the scattering problem \eqref{eq:ScatteringProblem}
is equivalent to the problem of solving the 
\emph{nonlinear Lippmann-Schwinger equation}
\begin{equation}
  \label{eq:LippmannSchwinger}
  u(x) 
  \,=\, \ui(x) + k^2 \int_D \Phi_k(x-y) q(y,|u(y)|) u(y) \dy \,, 
  \qquad x\in D\,,
\end{equation}
in $\LinftyD$.
Here $\Phi_k$ is the outgoing free space fundamental solution to the
Helmholtz equation, i.e., for $x,y\in\Rd$, $x\not=y$, we have 
$\Phi_k(x-y)=(\rmi/4)\, H^{(1)}_0(k|x-y|)$ if $d=2$ and 
$\Phi_k(x-y)=e^{\rmi k|x-y|}/(4\pi|x-y|)$ if $d=3$.
The arguments that we use to prove this are the same as in the linear
case (see, e.g., \cite[Thm.~7.12]{Kir21}). 

\begin{lemma}
  \label{lmm:LippmannSchwinger}
  If $u\in\HtwolocRd$ is a solution of \eqref{eq:ScatteringProblem},
  then $u|_{D}$ is a solution of \eqref{eq:LippmannSchwinger}.
  Conversely, if $u\in\LinftyD$ is a solution of
  \eqref{eq:LippmannSchwinger} then $u$ can be extended to a solution
  $u\in\HtwolocRd$ of \eqref{eq:ScatteringProblem}.
\end{lemma}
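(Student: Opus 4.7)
The plan is to follow the same two-sided argument that underlies the classical Lippmann-Schwinger equivalence in the linear case, adapting it by treating $q(\cdot,|u|)u$ as a prescribed compactly supported source term once $u$ is known.

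For the forward direction, suppose $u\in\HtwolocRd$ solves \eqref{eq:ScatteringProblem}. Using $n^2 = 1+q$ and the fact that $\ui$ satisfies \eqref{eq:uIncident}, I would rewrite \eqref{eq:uTotal} for $\us = u - \ui$ as
\begin{equation*}
  \Delta \us + k^2\us \,=\, -k^2 q(\ph,|u|)u \qquad \text{in } \Rd\,.
\end{equation*}
Since $\supp(q)\subseteq\ol D\times\R$ and $q\in\LinftyRd\times\R$, the right-hand side lies in $\Linfty(\Rd)$ with support in $\ol D$. Combined with the Sommerfeld radiation condition \eqref{eq:Sommerfeld}, Green's representation theorem for radiating solutions of the inhomogeneous Helmholtz equation (which is the content of the linear case cited from \cite{Kir21}) then yields
\begin{equation*}
  \us(x) \,=\, k^2\int_D \Phi_k(x-y)\,q(y,|u(y)|)u(y)\dy\,, \qquad x\in\Rd\,.
\end{equation*}
Adding $\ui$ on both sides and restricting $x$ to $D$ gives \eqref{eq:LippmannSchwinger}, and since $u$ is in particular in $\Linfty(D)$ by Sobolev embedding (in $d=2,3$ combined with the boundedness of $D$ and local $H^2$-regularity), the claimed membership holds.

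For the converse, given $u\in\LinftyD$ solving \eqref{eq:LippmannSchwinger}, I would \emph{define} an extension $\utilde\colon\Rd\to\C$ by
\begin{equation*}
  \utilde(x) \,:=\, \ui(x) + k^2\int_D\Phi_k(x-y)\,q(y,|u(y)|)u(y)\dy\,, \qquad x\in\Rd\,,
\end{equation*}
which coincides with $u$ on $D$ by \eqref{eq:LippmannSchwinger}. The density $q(\ph,|u|)u$ is in $\Linfty(D)$ with support in $\ol D$, so standard volume potential regularity (mapping $\Linfty$ compactly supported densities into $\HtwolocRd$, and into $C^\infty$ away from $\ol D$) shows $\utilde\in\HtwolocRd$. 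Applying $\Delta+k^2$ under the integral and using $(\Delta+k^2)\Phi_k=-\delta$ in the distributional sense gives
\begin{equation*}
  \Delta\utilde + k^2\utilde \,=\, -k^2 q(\ph,|\utilde|)\utilde \qquad \text{in } \Rd\,,
\end{equation*}
which is exactly \eqref{eq:uTotal} written through $n^2=1+q$. The Sommerfeld condition \eqref{eq:Sommerfeld} for $\utilde - \ui$ is inherited from the fundamental solution $\Phi_k$, uniformly in direction, because the density has compact support.

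The arguments are routine and the paper explicitly points to \cite[Thm.~7.12]{Kir21}. The only point requiring a little care is the $\HtwolocRd$-regularity of the volume potential when the density is merely $\Linfty$ (rather than, say, H\"older continuous), but this is a well-known consequence of Calderón-Zygmund estimates applied to the Newtonian part of $\Phi_k$ combined with the smoothness of the remainder; this is the step I would write out most carefully, while the representation formula and the computation in the Helmholtz operator can be quoted.
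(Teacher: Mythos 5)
Your proposal is correct and follows essentially the same route as the paper: treat $k^2 q(\ph,|u|)u$ as a compactly supported $\LinftyD$ source (using $H^2\hookrightarrow\Linfty$ for $d=2,3$), reduce to the linear volume potential theory of \cite[Thms.~7.11--7.12]{Kir21} for the forward direction, and define the extension $\ui+k^2\Phi_k\ast(q(\ph,|u|)u)$ for the converse. The only cosmetic difference is that you quote a representation theorem for radiating solutions of the inhomogeneous Helmholtz equation where the paper forms the volume potential $v$ and identifies $v=\us$ via uniqueness for the homogeneous radiating problem; these are the same argument.
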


\begin{proof}
  Let $u\in\HtwolocRd$ be a solution of
  \eqref{eq:ScatteringProblem}. 
  Then $q(\ph,|u|) u|_{D}\in \LinftyD$, and the volume potential 
  $v := k^2 \Phi_k\ast (q(\ph,|u|) u) \in \HtwolocRd$ is a radiating
  solution of
  \begin{equation}
    \label{eq:ProofLS1}
    \Delta v + k^2 v 
    \,=\, - k^2 q(\ph,|u|) u \qquad \text{in } \Rd 
  \end{equation}
  (see, e.g., \cite[Thm.~7.11]{Kir21}).  
  Accordingly, $\us-v$ is a radiating solution of 
  $\Delta(\us-v)+k^2(\us-v)=0$ in~$\Rd$. 
  Thus $v = \us$ (see, e.g., \cite[p.~24]{ColKre19}), which proves the
  first part.

  Conversely, let $u\in \LinftyD$ be a solution of 
  \eqref{eq:LippmannSchwinger}.
  Defining $v := k^2 \Phi_k\ast (q(\ph,|u|) u)$ in $\Rd$, we find that
  $u=\ui+v$ in $D$. 
  Moreover, $v \in \HtwolocRd$ satisfies \eqref{eq:ProofLS1}, and if
  we extend $u$ by $\ui+v$ to all of $\Rd$, then $u$ solves
  \eqref{eq:ScatteringProblem}. 
\end{proof}

In the following we consider this problem for more general source
terms and study radiating solutions $v\in\HtwolocRd$ of 
\begin{equation}
  \label{eq:GeneralSP}
  \Delta v + k^2 v
  \,=\, -k^2 q(\ph,|v+f|) (v+f) 
  \qquad \text{in } \Rd \,,
\end{equation}
where $f\in \LinftyD$.
In this situation, $f$ represents the incident field and $v$ the
corresponding scattered field. 
As in Lemma~\ref{lmm:LippmannSchwinger} we find that this is
equivalent to the problem of solving the nonlinear integral equation 
\begin{equation}
  \label{eq:GeneralLS}
  v(x) 
  \,=\,  k^2 \int_{D}  \Phi_k(x-y) q(y,|v(y)+f(y)|) (v(y)+f(y)) \dy \,, 
  \qquad x\in D \,,
\end{equation}
in $\LinftyD$. 

\begin{remark}
  In the linear case, i.e., when $q=q_0$, the scattering problem
  \eqref{eq:GeneralSP} reduces to 
  \begin{equation}
    \label{eq:LinearSP}
    \Delta v_0 + k^2 v_0
    \,=\, -k^2 q_0 (v_0+f) 
    \qquad \text{in } \Rd \,,
  \end{equation}
  and the corresponding linear Lippmann-Schwinger equation reads
  \begin{equation}
    \label{eq:LinearLS}
    v_0(x) 
    \,=\,  k^2 \int_{D} \Phi_k(x-y) q_0(y) (v_0(y)+f(y)) \dy \,, 
    \qquad x\in D\,.
  \end{equation}
  We note that $I-k^2\Phi_k\ast (q_0\ph)$ is an isomorphism
  on~$\LtwoD$ (see \cite[Thm.~7.13]{Kir21} for the corresponding
  result in the case when $D$ is a ball $\BR$) as well as
  on~$\LinftyD$. 
  For the latter we recall that $k^2\Phi_k\ast (q_0\ph)$ maps
  $\LinftyD$ into $H^2(\BR)$ for $\BR$ containing $D$, which embeds
  continuously into $\LinftyD$. 
  In particular we have 
  \begin{subequations}
    \label{eq:EstLS}
    \begin{align}
      \bigl\| \bigl( I-k^2\Phi_k\ast (q_0\ph) \bigr)^{-1} g \bigr\|_\LtwoD
      &\,\leq\, C_{LS,2} \| g \|_\LtwoD \,,
      &&g\in \LtwoD \,, 
        \label{eq:EstLSLtwo} \\
      \bigl\| \bigl( I-k^2\Phi_k\ast (q_0\ph) \bigr)^{-1} g \bigr\|_\LinftyD
      &\,\leq\, C_{LS,\infty} \| g \|_\LinftyD \,,
      &&g\in \LinftyD \,. 
        \label{eq:EstLSLinfty}
    \end{align}
  \end{subequations}
  Accordingly, the unique solution $v_0$ of \eqref{eq:LinearLS} is
  given by
  \begin{equation}
    \label{eq:DefV0}
    v_0 
    \,=\, \bigl( I-k^2\Phi_k\ast (q_0\ph) \bigr)^{-1}
    \bigl( k^2 \Phi_k\ast (q_0 f) \bigr) 
    \qquad \text{in } D\,,
  \end{equation}
  and we denote by $V_0$ the linear operator that maps $f$ to $v_0$. 
  The solution $v_0$ can be extended by the right hand side of
  \eqref{eq:LinearLS} to a radiating solution of \eqref{eq:LinearSP} 
  in all of~$\Rd$, which we also denote by $v_0=V_0f$. 
  For later reference we note that   
  \eqref{eq:EstLS} implies
  \begin{subequations}
    \label{eq:V0Estimate}
    \begin{align}
      \|V_0f\|_\LtwoD
      &\,\leq\, C_{V_0,2} \|f\|_\LtwoD \,,
      &&f\in \LtwoD \,,
         \label{eq:V0EstimateLtwo} \\
      \|V_0f\|_\LinftyD
      &\,\leq\, C_{V_0,\infty} \|f\|_\LinftyD \,,
      &&f\in \LinftyD \,,       
         \label{eq:V0EstimateLinfty}
    \end{align}
  \end{subequations}
  where
  $C_{V_0,\infty}=k^2C_{LS,\infty}\|\Phi_k\|_{L^1(B_{2R}(0))}\|q_0\|_\LinftyD$
  and
  $C_{V_0,2}=k^2C_{LS,2}\|\Phi_k\|_{L^1(B_{2R}(0))}\|q_0\|_\LinftyD$. 
  Here and in the following $R>0$ is chosen such that~$D\tm\BR$.
  \hfill$\lozenge$
\end{remark}

In Proposition~\ref{pro:Wellposedness} below we establish
well-posedness of \eqref{eq:GeneralSP}. 
Writing
\begin{equation*}
  U_\delta 
  \,:=\, \bigl\{ v\in \LinftyD \,\big|\, 
  \|v\|_\LinftyD \leq \delta \bigr\} \,,
  \qquad \delta>0 \,,
\end{equation*}
we show that for any $f\in U_{\delta}$ with $\delta>0$ sufficiently
small there exists a unique solution $v$ of~\eqref{eq:GeneralLS} in
$\LinftyD$ such that the difference $w:=v-v_0$ with $v_0$ from
\eqref{eq:DefV0} satisfies $w\in U_{\delta}$. 
We call this~$v$ the unique small solution of \eqref{eq:GeneralLS}. 
Denoting by $V$ the nonlinear operator that maps $f$ to~$v$, we shall
see that $V$ is Fr\'{e}chet-differentiable at zero and $V'(0)=V_0$. 
The mere existence of such an operator is well-known, see for
instance, \cite[Thm.~1.2]{CheEveWet21}, \cite[Thm.~1]{Gut04}, or
\cite[Thm.~1]{Man19}. 

\begin{proposition}
  \label{pro:Wellposedness}
  Suppose that Assumption~\ref{ass:Index1} is satisfied. 
  There exists $\delta > 0$ such that for any given $f \in U_{\delta}$
  the nonlinear integral equation \eqref{eq:GeneralLS} has a unique
  solution ${v = V(f) \in \LinftyD}$ satisfying $v-V_0f \in U_{\delta}$, and
  there exists a constant\footnote{Throughout $C$ denotes a generic
    constant, the values of which might change from line to line.}
  $C>0$ such that, for all such~$f$,
  \begin{subequations}
    \label{eq:VEstimates}
    \begin{align}
      \|V(f) \|_\LinftyD 
      &\,\leq\, C \|f\|_\LinftyD \,, 
        \label{eq:VStabilityLinfty}\\
      \|V(f) \|_\LtwoD 
      &\,\leq\, C \|f\|_\LtwoD \,, 
        \label{eq:VStabilityLtwo}\\
      \|V(f)- V_0f \|_\LinftyD 
      &\,\leq\, C \|f\|_\LinftyD^{1+\alpha} \,, 
        \label{eq:VHoelderLinfty}\\ 
      \|V(f)- V_0f \|_\LtwoD 
      &\,\leq\, C \|f\|_\LinftyD^{\alpha} \|f\|_\LtwoD \,. 
        \label{eq:VHoelderLtwo}
    \end{align}
  \end{subequations}
\end{proposition}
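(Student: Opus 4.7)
The plan is to seek the small solution in the form $v=v_0+w$ with $v_0:=V_0 f$, and to determine the correction $w\in\LinftyD$ as a fixed point of a contraction on the closed ball $U_\delta$. Subtracting the linear Lippmann--Schwinger equation \eqref{eq:LinearLS} from \eqref{eq:GeneralLS}, and writing the nonlinear integrand as $q(\,\cdot\,,|u|)u-q_0 u+q_0 w$ with $u:=v_0+w+f$, one readily rearranges into $w=T(w)$ where
\begin{equation*}
  T(w) \,:=\, \bigl(I-k^2\Phi_k\ast(q_0\,\cdot\,)\bigr)^{-1}
  \Bigl(k^2\Phi_k\ast\bigl[q(\,\cdot\,,|u|)u-q_0 u\bigr]\Bigr)\,,
\end{equation*}
so a fixed point $w$ produces the sought solution $V(f):=v_0+w$.

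To apply Banach's theorem on $U_\delta$, note first that for $f,w\in U_\delta$ the bound \eqref{eq:V0EstimateLinfty} gives $\|u\|_\LinftyD\leq(C_{V_0,\infty}+2)\delta$, so shrinking $\delta$ guarantees $\|u\|_\LinftyD\leq 1$ and Assumption~\ref{ass:Index1}(iii) is in force. Combining \eqref{eq:Assumption3a} with the resolvent bound \eqref{eq:EstLSLinfty} yields
\begin{equation*}
  \|T(w)\|_\LinftyD \,\leq\, C\,\|u\|_\LinftyD^{1+\alpha} \,\leq\, C\delta^{1+\alpha}\,,
\end{equation*}
which is bounded by $\delta$ once $\delta$ is small enough. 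For the Lipschitz estimate of $T$ set $u_j:=v_0+w_j+f$ and observe that $u_1-u_2=w_1-w_2$; then \eqref{eq:Assumption3} together with \eqref{eq:EstLSLinfty} gives
\begin{equation*}
  \|T(w_1)-T(w_2)\|_\LinftyD
  \,\leq\, C\bigl(\|u_1\|_\LinftyD^\alpha+\|u_2\|_\LinftyD^\alpha\bigr)\|w_1-w_2\|_\LinftyD
  \,\leq\, C\delta^\alpha\|w_1-w_2\|_\LinftyD\,,
\end{equation*}
which is a strict contraction for $\delta$ sufficiently small. Banach's fixed point theorem then supplies the unique $w\in U_\delta$ with $T(w)=w$, and hence the desired small solution $v=V(f)$.

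The four estimates in \eqref{eq:VEstimates} are obtained by bootstrapping the fixed-point identity. From $\|w\|_\LinftyD\leq C\|u\|_\LinftyD^{1+\alpha}\leq C(A\|f\|_\LinftyD+\|w\|_\LinftyD)^{1+\alpha}$ with $A:=C_{V_0,\infty}+1$, one factors out $(A\|f\|_\LinftyD+\|w\|_\LinftyD)^\alpha\leq C\delta^\alpha$, absorbs the resulting $\|w\|_\LinftyD$ into the left-hand side for $\delta$ small, and concludes $\|w\|_\LinftyD\leq A\|f\|_\LinftyD$; feeding this back once more gives $\|u\|_\LinftyD\leq 2A\|f\|_\LinftyD$ and hence \eqref{eq:VHoelderLinfty}, with \eqref{eq:VStabilityLinfty} then immediate from $\|V(f)\|_\LinftyD\leq\|v_0\|_\LinftyD+\|w\|_\LinftyD$. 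The $L^2$-statements \eqref{eq:VStabilityLtwo} and \eqref{eq:VHoelderLtwo} follow by the same scheme, now using \eqref{eq:EstLSLtwo} and \eqref{eq:V0EstimateLtwo} on the smoothing step and exploiting the pointwise consequence of \eqref{eq:Assumption3a}, $|q(\,\cdot\,,|u|)u-q_0 u|\leq C_q|u|^\alpha|u|$, together with $\||u|^\alpha|u|\|_\LtwoD\leq\|u\|_\LinftyD^\alpha\|u\|_\LtwoD$. The main technical care lies in this bootstrapping, which has to upgrade the crude ball estimate $\|w\|_\LinftyD\leq\delta$ to the sharp superlinear bound $O(\|f\|_\LinftyD^{1+\alpha})$; everything else is a routine contraction argument built directly on Assumption~\ref{ass:Index1}(iii) and the resolvent estimates \eqref{eq:EstLS}.
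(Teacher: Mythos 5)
Your proposal is correct and follows essentially the same route as the paper: the same decomposition $w=v-V_0f$, the identical fixed-point operator (your $T$ is the paper's $G$ with $q_N=q-q_0$), a contraction on $U_\delta$ for $C_q\delta^\alpha$ small, and the same absorption/bootstrap argument (based on \eqref{eq:Assumption3a}, \eqref{eq:EstLS}, \eqref{eq:V0Estimate} and Young's inequality) to upgrade the crude ball bound to \eqref{eq:VEstimates}. The only cosmetic differences are that you explicitly verify $\|u\|_\LinftyD\leq 1$ so that Assumption~\ref{ass:Index1}(iii) applies, and you derive the bound on $\|w\|_\LinftyD$ before the stability estimate rather than after, which changes nothing of substance.
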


\begin{remark}
  \label{rem:Wellposedness}
  The proof of Proposition~\ref{pro:Wellposedness} below shows that
  the upper bound $\delta>0$ has to be such that the product
  $C_q\delta>0$, where $C_q$ is the upper bound on the nonlinearity
  from Assumption~\ref{ass:Index1}, is sufficiently small. 
  This means that there is a tradeoff between the size of the
  nonlinearity and the intensity of the incident fields and
  scattered fields that are covered by this well-posedness result. 
  \hfill$\lozenge$
\end{remark}

\begin{proof}[Proof of Proposition~\ref{pro:Wellposedness}]
  For any given $f\in \Linfty(D)$ let $v_0:=V_0f\in\LinftyD$ as in
  \eqref{eq:DefV0}. 
  Then, $v\in\LinftyD$ solves~\eqref{eq:GeneralLS} if and only if
  $w := v-v_0$ satisfies
  \begin{equation*}
    w - k^2\Phi_k \ast (q_0w)
    \,=\, 
    k^2\Phi_k\ast \bigl( q_N(\ph,|w+v_0+f|)(w+v_0+f) \bigr)
    \qquad \text{in } D  \,,
  \end{equation*}
  where $q_N := q-q_0$ denotes the nonlinear part of the contrast
  function. 
  This is equivalent to $w$ being a fixed point of the nonlinear map 
  $G: \LinftyD \to \LinftyD$, 
  \begin{equation}
    \label{eq:DefG}
    G(w) 
    \,:=\, \bigl(I-k^2\Phi_k \ast (q_0\ph)\bigr)^{-1}
    \Bigl(k^2\Phi_k\ast \bigl( q_N(\ph,|w+v_0+f|)(w+v_0+f) \bigr) \Bigr) \,.
  \end{equation}
  Using \eqref{eq:EstLSLinfty}, Young's inequality, 
  \eqref{eq:Assumption3a}, and \eqref{eq:V0EstimateLinfty} we have for
  any $f\in U_{\delta}$ and $w\in U_{\delta}$ that
  \begin{equation*}
    \begin{split}
      \|G(w)\|_\LinftyD
      &\,\leq\, C_{LS,\infty} \bigl\|
      k^2\Phi_k\ast
      \bigl(q_N(\ph,|w+v_0+f|)(w+v_0+f)\bigr) \bigr\|_\LinftyD \\
      &\,\leq\, k^2 C_{LS,\infty} \|\Phi_k\|_{L^1(B_{2R}(0))} 
      \bigl\|q_N(\ph,|w+v_0+f|)(w+v_0+f)\bigr\|_\LinftyD \\
      &\,\leq\, k^2 C_{LS,\infty} \|\Phi_k\|_{L^1(B_{2R}(0))}
      C_q \|w+v_0+f\|_\LinftyD^{1+\alpha} \\
      &\,\leq\, k^2 C_{LS,\infty} \|\Phi_k\|_{L^1(B_{2R}(0))}
      C_q \bigl( \delta + (C_{V_0,\infty}+1)\delta \bigr)^{1+\alpha} \,. 
    \end{split}
  \end{equation*}
  Here, $R>0$ was chosen such that $D\tm\BR$.
  Similarly, applying \eqref{eq:Assumption3} we obtain for
  any~$f\in U_{\delta}$ and $w_1,w_2\in U_{\delta}$ that 
  \begin{equation*}
    \begin{split}
      &\|G(w_1)-G(w_2)\|_\LinftyD\\
      &\,\leq\, k^2 C_{LS,\infty} \|\Phi_k\|_{L^1(B_{2R}(0))}
      C_q \bigl(\|w_1+v_0+f\|_\LinftyD^\alpha
      + \|w_2+v_0+f\|_\LinftyD^\alpha \bigr)
      \|w_1-w_2\|_\LinftyD \\
      &\,\leq\, k^2 C_{LS,\infty} \|\Phi_k\|_{L^1(B_{2R}(0))}
      C_q\, 2 \bigl(\delta + (C_{V_0,\infty}+1)\delta \bigr)^\alpha
      \|w_1-w_2\|_\LinftyD\,. 
    \end{split}
  \end{equation*}
  Choosing $\delta > 0$ such that $C_q\delta > 0$ is sufficiently
  small, we find that 
  \begin{equation*}
    \|G(w)\|_\LinftyD \,\leq\, \delta \,,\qquad
    \|G(w_1)-G(w_2)\|_\LinftyD
    \,\leq\, \frac{1}{2}\|w_1-w_2\|_\LinftyD \,.  
  \end{equation*}
  So $G:U_{\delta}\to  U_{\delta} $ is a contraction, and Banach's
  fixed point theorem yields the existence of a uniquely determined
  fixed point $w\in U_{\delta}$ of $G$ such that $v=V(f):=w+V_0f$
  solves \eqref{eq:GeneralLS}. 
  
  It remains to show~\eqref{eq:VStabilityLinfty}--\eqref{eq:VHoelderLtwo}. 
  This follows from \eqref{eq:DefG}, \eqref{eq:EstLSLinfty}, Young's
  inequality, and \eqref{eq:Assumption3a} because 
  \begin{equation}
    \label{eq:ProofWellposedness7}
    \begin{split}
      &\|V(f)-V_0f\|_\LinftyD
      \,=\, \|G(V(f)-V_0f)\|_\LinftyD \\
      &\,=\, \bigl\| \bigl(I-k^2\Phi_k \ast (q_0\ph)\bigr)^{-1}
      \Bigl(k^2\Phi_k\ast \bigl( q_N(\ph,|V(f)+f|)(V(f)+f) \bigr) \Bigr)
      \bigr\|_\LinftyD \\
      &\,\leq\, k^2 C_{LS,\infty} \|\Phi_k\|_{L^1(B_{2R}(0))}
      \bigl\|q_N(\ph,|V(f)+f|)(V(f)+f)\bigr\|_\LinftyD  \\
      &\,\leq\, k^2 C_{LS,\infty} \|\Phi_k\|_{L^1(B_{2R}(0))}
      C_q \|V(f)+f\|_\LinftyD^{1+\alpha} \\
      &\,\leq\, k^2 C_{LS,\infty} \|\Phi_k\|_{L^1(B_{2R}(0))}
      C_q \bigl( \|V(f)-V_0f\|_\LinftyD+\|V_0f+f\|_\LinftyD \bigr)^\alpha
      \|V(f)+f\|_\LinftyD \,.
    \end{split}
  \end{equation}
  Hence, \eqref{eq:V0EstimateLinfty} yields 
  \begin{equation*}
    \begin{split}
      \|V(f)\|_\LinftyD 
      &\,\leq\, C_{V_0,\infty}\|f\|_\LinftyD\\
      &\phantom{\,\leq\,}
      + k^2 C_{LS,\infty} \|\Phi_k\|_{L^1(B_{2R}(0))}
      C_q  \bigl(\delta+(C_{V_0,\infty}+1)\delta)^\alpha
      \bigl(\|V(f)\|_\LinftyD +\|f\|_\LinftyD\bigr) \,.
    \end{split}
  \end{equation*}
  Given that $C_q\delta> 0$ is sufficiently small as
  in the first part of the proof we thus
  obtain~\eqref{eq:VStabilityLinfty}.  
  Therewith, \eqref{eq:ProofWellposedness7}
  shows~\eqref{eq:VHoelderLinfty}. 
  Finally, using \eqref{eq:DefG}, \eqref{eq:EstLSLtwo}, Young's
  inequality, and \eqref{eq:Assumption3a} we get 
  \begin{equation*}
    \begin{split}
      &\|V(f)-V_0f\|_\LtwoD\\
      &\,=\, \|G(V(f)-V_0f)\|_\LtwoD\\
      &\,=\, \bigl\| \bigl(I-k^2\Phi_k \ast (q_0\ph)\bigr)^{-1}
      \Bigl(k^2\Phi_k\ast \bigl( q_N(\ph,|V(f)+f|)(V(f)+f) \bigr) \Bigr)
      \bigr\|_\LtwoD \\
      &\,\leq\, k^2 C_{LS,2} \|\Phi_k\|_{L^1(B_{2R}(0))}
      \bigl\|q_N(\ph,|V(f)+f|)(V(f)+f)\bigr\|_\LtwoD  \\
      &\,\leq\, k^2 C_{LS,2} \|\Phi_k\|_{L^1(B_{2R}(0))}
      C_q \||V(f)+f|^{1+\alpha}\|_\LtwoD \\
      &\,\leq\, k^2 C_{LS,2} \|\Phi_k\|_{L^1(B_{2R}(0))}
      C_q \|V(f)+f\|_\LinftyD^\alpha \|V(f)+f\|_\LtwoD \,.  
    \end{split}
  \end{equation*}
  Proceeding as before this implies \eqref{eq:VStabilityLtwo} when 
  $C_q\delta>0$ is sufficiently small, and thus
  also~\eqref{eq:VHoelderLtwo}.  
\end{proof}

After extending the right hand side of \eqref{eq:GeneralLS} to all
of~$\Rd$, Proposition~\ref{pro:Wellposedness} guarantees the existence
of a unique small radiating solution of the generalized scattering
problem~\eqref{eq:GeneralSP} for any $f\in\LinftyD$ that is
sufficiently small. 
We denote this extension by $v=V(f)$ as well. 
In particular, Proposition~\ref{pro:Wellposedness} tells us that for
all $\LinftyD$-small incoming waves $\ui$ we have a unique
small solution $u = \ui + V(\ui|_{D})$ of the nonlinear forward
problem~\eqref{eq:ScatteringProblem}. 
Here small means
that~$\|V(\ui|_{D})-V_0(\ui|_{D})\|_\LinftyD \leq \delta$ with
$\delta>0$ from Proposition~\ref{pro:Wellposedness}.
Substituting the far field asymptotics of the fundamental solution
(see, e.g., \cite[p.~24 and p.~89]{ColKre19}) into the extension of
the integral representation \eqref{eq:GeneralLS} to all of $\Rd$, we
obtain the following result. 

\begin{proposition}
  \label{pro:Farfield}
  Suppose that Assumption~\ref{ass:Index1} is satisfied, let 
  $\delta>0$ be as in Proposition~\ref{pro:Wellposedness}, and let
  $f\in U_{\delta}$.  
  Then the extension of the unique solution 
  $v = V(f) \in U_{\delta}$ of \eqref{eq:GeneralLS} to all of $\Rd$,
  has the asymptotic behavior 
  \begin{equation*}
    v(x) 
    \,=\, C_d  e^{\rmi k|x|}|x|^{\frac{1-d}{2}} \vinfty(\xhat) 
    + O(|x|^{-\frac{d+1}{2}}) \,, \qquad |x|\to\infty \,,
  \end{equation*}
  uniformly in all directions $\xhat:=x/|x|\in\Sd$, where
  \begin{equation*}
    C_d \,=\, e^{\rmi\pi/4} /\sqrt{8\pi k} \quad\text{if } d=2 
    \qquad\text{and}\qquad 
    C_d \,=\, 1/(4\pi) \quad\text{if } d=3 \,.
  \end{equation*}
  The \emph{far field pattern} $\vinfty = (V(f))^\infty \in \LtwoSd$
  is given by 
  \begin{equation}
    \label{eq:Farfield}
    \vinfty(\xhat) 
    \,=\, k^2 \int_{D} q\bigl(y,|v(y)+f(y)|\bigr) 
    \bigl(v(y)+f(y)\bigr) e^{-\rmi k \xhat\cdot y} \dy \,,
    \qquad \xhat\in\Sd \,.
  \end{equation}
\end{proposition}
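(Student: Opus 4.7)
The plan is to reduce everything to the asymptotic behavior of the volume potential appearing in the extended integral representation of $v$. By construction (as in the proof of Lemma~\ref{lmm:LippmannSchwinger}), the extension of $v=V(f)$ to all of $\Rd$ coincides with the volume potential
\begin{equation*}
  v(x) \,=\, k^2 \int_D \Phi_k(x-y)\, g(y) \dy \,, \qquad x \in \Rd \,,
\end{equation*}
where $g := q(\ph,|v+f|)(v+f)$. Since Assumption~\ref{ass:Index1}(i) gives $\supp(q)\tm \ol{D}\times \R$, the density $g$ is supported in $\ol{D}$. Using Assumption~\ref{ass:Index1}(iii) (in the form~\eqref{eq:Assumption3a}) together with the bound~\eqref{eq:VStabilityLinfty} and $f\in U_\delta$, one checks that $g\in \LinftyD$, so the potential above is a genuine volume potential with bounded, compactly supported density.

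Next I would plug in the well-known asymptotic expansion of the fundamental solution $\Phi_k$, namely
\begin{equation*}
  \Phi_k(x-y) \,=\, C_d\, e^{\rmi k|x|} |x|^{\frac{1-d}{2}} e^{-\rmi k \xhat \cdot y}
  + O\bigl(|x|^{-\frac{d+1}{2}}\bigr) \,, \qquad |x|\to\infty \,,
\end{equation*}
with the constant $C_d$ as stated in the proposition and uniformly for $y$ in the bounded set $D$; this is the standard expansion cited from \cite[p.~24 and p.~89]{ColKre19}. Since $D$ is bounded and $g\in\LinftyD$, the remainder term is uniformly integrable over $D$ and one may interchange limit and integral to conclude
\begin{equation*}
  v(x) \,=\, C_d\, e^{\rmi k|x|} |x|^{\frac{1-d}{2}} \biggl( k^2 \int_D g(y)\, e^{-\rmi k \xhat\cdot y}\dy \biggr) + O\bigl(|x|^{-\frac{d+1}{2}}\bigr)
\end{equation*}
uniformly in $\xhat\in\Sd$, which is exactly the stated asymptotic expansion with the far field pattern~\eqref{eq:Farfield}. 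Membership $\vinfty\in\LtwoSd$ is immediate from the continuity of~$\vinfty$ in~$\xhat$ and compactness of $\Sd$.

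There is no substantial obstacle: the argument is the same as for the linear case, and Proposition~\ref{pro:Wellposedness} ensures that the nonlinear density $g$ lies in $\LinftyD$, which is what is needed to justify the uniform passage to the limit. The only minor point to verify carefully is that the $O$-term in the expansion of $\Phi_k$ is uniform in $y\in D$ (so that the remainder survives the $\LinftyD$--$L^1$ pairing against $g$); this is standard and follows directly from the explicit asymptotics of the Hankel function $H^{(1)}_0$ in $d=2$ and from Taylor-expanding $|x-y|=|x|-\xhat\cdot y+O(|x|^{-1})$ in $d=3$.
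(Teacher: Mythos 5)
Your argument is correct and is exactly the route the paper takes: the paper proves this proposition simply by substituting the far field asymptotics of $\Phi_k$ into the extension of the integral representation \eqref{eq:GeneralLS} to all of $\Rd$, noting as you do that the density $q(\ph,|v+f|)(v+f)$ is bounded and supported in $\ol{D}$ by Assumption~\ref{ass:Index1} and Proposition~\ref{pro:Wellposedness}. Your added remarks on the uniformity of the remainder in $y\in D$ and on $\vinfty\in\LtwoSd$ are exactly the standard justifications the paper leaves implicit.
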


In the following we will we restrict the discussion to incident fields
that are superpositions of plane waves. 
We define the \emph{Herglotz operator} $H:\LtwoSd \to \LtwoD$,
\begin{equation}
  \label{eq:HerglotzOperator}
  (H \psi)(x) 
  \,:=\, \int_\Sd \psi(\theta) e^{\rmi k x\cdot\theta} \ds(\theta) \,,
  \qquad x\in{D} \,,  
\end{equation}
and we note that  its adjoint $H^*:\LtwoD \to \LtwoSd$ satisfies
\begin{equation}
  \label{eq:HerglotzAdjoint}
  (H^*\phi)(\xhat) 
  \,=\, \int_{D} \phi(y) e^{-\rmi k \xhat\cdot y} \dy \,,
  \qquad \xhat\in\Sd \,.
\end{equation}
The operators $H$ and $H^*$ are compact. 
Observing that 
\begin{equation}
  \label{eq:BoundHerglotzLinfty}
  \|H\psi\|_\LinftyD 
  \,\leq\, \omegad^{1/2}\|\psi\|_\LtwoSd \,,
\end{equation}
where $\omegad$ denotes the area of the unit sphere, we define
\begin{equation*}
  \Dcal(F) 
  \,:=\, \bigl\{ \psi \in\LtwoSd \;\big|\; 
  \|\psi\|_\LtwoSd \leq \delta / \omegad^{1/2} \bigr\} \,,
\end{equation*}
where $\delta > 0$ is as in
Proposition~\ref{pro:Wellposedness}. 
Then any $f=Hg$ with $g\in\Dcal(F)$ satisfies $f\in U_{\delta}$, and 
the unique small radiating solution $v = V(Hg)$ of
\eqref{eq:GeneralSP} has the far field pattern 
$\vinfty = (V(Hg))^\infty$.
Introducing the \emph{nonlinear far field operator} 
$F:\Dcal(F)\tm\LtwoSd \to \LtwoSd$ by 
\begin{equation}
  \label{eq:FarfieldOperator}
  F(g) 
  \,:=\, (V(Hg))^\infty \,,
\end{equation}
we obtain from \eqref{eq:Farfield} that
\begin{equation*}
  F(g)
  \,=\, H^* \bigl( 
  k^2 q(\ph,|v + Hg|)(v + Hg) \bigr) \,.
\end{equation*}
These facts are summarized as follows.

\begin{proposition}
  \label{pro:NonlinearFactorization}
  Suppose that Assumption~\ref{ass:Index1} holds, and let
  $g\in\Dcal(F)$.  
  Then the far field pattern of the unique small radiating solution
  $V(f)$ of \eqref{eq:GeneralSP} with $f=Hg$ satisfies 
  \begin{equation}
    \label{eq:Factorization}
    F(g)
    \,=\, H^* T(Hg) \,,
  \end{equation}
  where $T: \Dcal(T) \tm \LtwoD \to \LtwoD$ is defined by 
  \begin{equation}
    \label{eq:OperatorT}
    T(f)(x)
    \,=\, k^2q\bigl(x,|V(f)(x)+f(x)|\bigr)(V(f)(x)+f(x)) \,, \qquad
    x\in{D} \,.
  \end{equation}
  Here $\Dcal(T) := \ol{H(\Dcal(F))}$.
\end{proposition}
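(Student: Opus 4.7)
The plan is essentially to unwind the definitions and identify the integral representation of the far field pattern from Proposition~\ref{pro:Farfield} with the composition $H^* T$ applied to $Hg$. The only substantive points to check are that the domains match up and that $T(Hg)$ lies in $\LtwoD$ so that $H^*T(Hg)$ makes sense.

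First I would fix $g\in\Dcal(F)$ and set $f:=Hg$. The bound \eqref{eq:BoundHerglotzLinfty} together with the definition of $\Dcal(F)$ yields
\begin{equation*}
  \|f\|_\LinftyD \,=\, \|Hg\|_\LinftyD \,\leq\, \omegad^{1/2}\|g\|_\LtwoSd \,\leq\, \delta \,,
\end{equation*}
so $f\in U_\delta$. Proposition~\ref{pro:Wellposedness} then supplies the unique small solution $v=V(f)\in \LinftyD$ of \eqref{eq:GeneralLS}, and Proposition~\ref{pro:Farfield} gives the far field pattern
\begin{equation*}
  (V(f))^\infty(\xhat) \,=\, k^2 \int_D q\bigl(y,|v(y)+f(y)|\bigr)\bigl(v(y)+f(y)\bigr) e^{-\rmi k\xhat\cdot y}\dy \,,\qquad \xhat\in\Sd\,.
\end{equation*}

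Next I would observe that the integrand on the right is precisely $T(f)(y)$ from \eqref{eq:OperatorT}, and that $T(f)\in\LinftyD\subset\LtwoD$ since $D$ is bounded, $q\in\Linfty(\Rd\times\R)$, and $v,f\in\LinftyD$. Comparing with the formula \eqref{eq:HerglotzAdjoint} for $H^*$ then identifies the far field expression as $H^*T(f)$, evaluated at $\xhat\in\Sd$. Since by the definition \eqref{eq:FarfieldOperator} we have $F(g)=(V(Hg))^\infty=(V(f))^\infty$, this yields the claimed factorization
\begin{equation*}
  F(g)\,=\,H^*T(f)\,=\,H^*T(Hg)\,.
\end{equation*}

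Finally, I would verify the domain statement $\Dcal(T):=\overline{H(\Dcal(F))}$: by construction $Hg\in H(\Dcal(F))\subset\Dcal(T)$ for every $g\in\Dcal(F)$, and the above computation shows that $T$ is well defined (as a map into $\LtwoD$) on all of $H(\Dcal(F))$, so the closure makes sense as the natural domain on which $T$ acts. There is no real obstacle here; the statement is a direct translation of the integral representation of the far field pattern into operator form, and the only mild subtlety is making sure that the smallness condition on $g$ built into $\Dcal(F)$ is strong enough (via \eqref{eq:BoundHerglotzLinfty}) to bring $Hg$ inside $U_\delta$, so that Propositions~\ref{pro:Wellposedness} and~\ref{pro:Farfield} are applicable.
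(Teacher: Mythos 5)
Your argument is correct and matches the paper exactly: the paper does not give a separate proof but obtains the factorization by precisely this chain — the bound \eqref{eq:BoundHerglotzLinfty} places $Hg$ in $U_\delta$, Propositions~\ref{pro:Wellposedness} and~\ref{pro:Farfield} supply $V(Hg)$ and its far field representation \eqref{eq:Farfield}, and comparison with \eqref{eq:HerglotzAdjoint} identifies this as $H^*T(Hg)$. Nothing further is needed.
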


\begin{remark}
  In the linear case when $q=q_0$, the far field operator 
  $F_0:\LtwoSd\to\LtwoSd$ is given by
  \begin{equation*}
    F_0g
    \,:=\,  (V_0Hg)^\infty \,.
  \end{equation*}
  The factorization \eqref{eq:Factorization} reads
  \begin{equation*}
    F_0g 
    \,=\, H^*T_0Hg \,, \qquad g\in\LtwoSd \,,
  \end{equation*}
  where $T_0: \LtwoD \to \LtwoD$ is defined by 
  \begin{equation}
    \label{eq:DefT0}
    T_0f 
    \,:=\, k^2 q_0(f+V_0f) \,.
  \end{equation}
  Then \eqref{eq:Assumption3a} implies that, for any $f\in\Dcal(T)$,
  \begin{equation*}
    \begin{split}
      &\|T(f) - T_0f\|_\LtwoD
      \,=\, k^2 \bigl\|q(\ph,|V(f)+f|)(V(f)+f) - q_0 (V_0f+f)\bigr\|_\LtwoD \\
      &\,\leq\, k^2 \bigl\|q(\ph,|V(f)+f|)(V(f)+f)-q_0 (V(f)+f)\bigr\|_\LtwoD
      + k^2 \|q_0(V(f)-V_0f)\|_\LtwoD \\
      &\,\leq\, k^2C_q \bigl\| |V(f)+f|^{1+\alpha} \bigr\|_\LtwoD 
      + k^2 \|q_0\|_\LinftyD \| V(f)-V_0f \|_\LtwoD \,.   
    \end{split}
  \end{equation*}
  Applying \eqref{eq:VHoelderLtwo} and
  \eqref{eq:VStabilityLinfty}--\eqref{eq:VStabilityLtwo} gives
  \begin{equation}
    \label{eq:BoundTMinusT0Ltwo}
    \begin{split}
      \|T(f) -T_0f\|_\LtwoD
      &\,\leq\, k^2C_q\|V(f)+f\|_\LinftyD^\alpha \|V(f)+f\|_\LtwoD 
      + C \|f\|_\LinftyD^\alpha \|f\|_\LtwoD \\
      &\,\leq\, C \|f\|_\LinftyD^\alpha \|f\|_\LtwoD \,.   
    \end{split}
  \end{equation}
  Similarly, using \eqref{eq:VHoelderLinfty} and
  \eqref{eq:VStabilityLinfty}, we find that, for any 
  $f\in\Dcal(T)$, 
  \begin{equation}
    \label{eq:BoundTMinusT0Linfty}
    \|T(f) -T_0f\|_\LinftyD
    \,\leq\, C \|f\|_\LinftyD^{\alpha+1} \,.
  \end{equation}
  \hfill$\lozenge$
\end{remark}

\section{Uniqueness for the inverse scattering problem}
\label{sec:InverseProblem}
In this section we restrict the discussion to generalized Kerr-type
nonlinearities $q$ as in~\eqref{eq:generalizedKerr}.
We show that the knowledge of the nonlinear far field operator
uniquely determines the associated nonlinear refractive index.
A related result has recently been established for a different class
of real analytic nonlinearities in \cite{Fur20}.

\begin{theorem}
  \label{thm:UniquenessInverseProblem}
  For $j=1,2$ let
  \begin{equation}
    \label{eq:MaterialLawUniqueness}
    q^{(j)}(x,|z|) 
    \,=\, q_0^{(j)}(x) + \sum_{l=1}^L q_l^{(j)}(x)|z|^{\alpha_l} 
    \qquad x\in\Rd \,,\; z\in\C \,,\; j=1,2 \,,
  \end{equation}
  be a generalized Kerr-type nonlinearity, where
  $q_0^{(j)},\ldots,q_L^{(j)}\in \LinftyRd$ with support in $D$,
  the lowest order term satisfies $\essinf q_0^{(j)}>-1$, 
  and the exponents fulfill $0<\alpha_1<\cdots<\alpha_L<\infty$. 
  If the associated nonlinear far field operators satisfy
  $F^{(1)}=F^{(2)}$, then~$q^{(1)} = q^{(2)}$.
\end{theorem}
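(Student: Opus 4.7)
The plan is to recover the coefficients $q_0^{(j)}, q_1^{(j)}, \ldots, q_L^{(j)}$ recursively by expanding the identity $F^{(1)}(tg) = F^{(2)}(tg)$ in powers of a small real scaling parameter $t>0$ and matching successive terms. For the lowest-order coefficient, I fix $g \in \Dcal(F)$, let $t \in (0,1]$, and apply estimate~\eqref{eq:VHoelderLtwo} with $f = tHg$ to get $V^{(j)}(tHg) - tV_0^{(j)}(Hg) = O(t^{1+\alpha_1})$ in $\LtwoD$. Taking far-field patterns via the bounded operator $H^*$ then yields $t^{-1}F^{(j)}(tg) \to F_0^{(j)}(g)$ in $\LtwoSd$ as $t \to 0^+$, so the hypothesis $F^{(1)} = F^{(2)}$ forces $F_0^{(1)} = F_0^{(2)}$. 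The classical uniqueness theorem for the linear inverse medium scattering problem (cited in the paragraph preceding the statement) then implies $q_0^{(1)} = q_0^{(2)} =: q_0$, and henceforth the associated operators $V_0$, $H$, and the background total field $u_0(g) := Hg + V_0(Hg)$ are common to both indices.

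Next I induct on $m \in \{1,\ldots,L\}$: assume $q_l^{(1)} = q_l^{(2)} =: q_l$ for every $l < m$. Iterating the contraction underlying the proof of Proposition~\ref{pro:Wellposedness}, I claim that $V^{(j)}(tHg)$ admits an asymptotic expansion in $t$ whose terms at all orders strictly below $t^{1+\alpha_m}$, as well as the portion of the $t^{1+\alpha_m}$-coefficient that does not contain $q_m^{(j)}$, depend only on the common data $q_0, \ldots, q_{m-1}$; the only genuinely new contribution at order $t^{1+\alpha_m}$ is the radiating solution $B_m^{(j)}$ of
\begin{equation*}
  \Delta B_m^{(j)} + k^2(1+q_0) B_m^{(j)} \,=\, -k^2 \, q_m^{(j)} \, |u_0(g)|^{\alpha_m} u_0(g) \qquad \text{in } \Rd .
\end{equation*}
Matching the $t^{1+\alpha_m}$-coefficients in $F^{(1)}(tg) = F^{(2)}(tg)$ therefore forces $(B_m^{(1)})^\infty = (B_m^{(2)})^\infty$ on $\Sd$, and by Rellich's lemma together with unique continuation the difference $w := B_m^{(1)} - B_m^{(2)}$ vanishes outside $D$ and has zero Cauchy data on $\partial D$. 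Integrating by parts the PDE for $w$ against an arbitrary test solution $v(h) := (I+V_0)Hh$ of the linear background problem yields the orthogonality identity
\begin{equation*}
  \int_D \bigl(q_m^{(1)} - q_m^{(2)}\bigr)(y)\, |u_0(g)(y)|^{\alpha_m} u_0(g)(y) \, v(h)(y) \dy \,=\, 0
  \qquad \text{for all } g,h \in \Dcal(F) .
\end{equation*}

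To close the induction, I use the density of Herglotz wave functions in the space of solutions of $\Delta + k^2 = 0$ on a neighborhood of $D$, together with the fact that $I+V_0$ is a bounded bijection from incident fields onto total fields of the linear background medium $q_0$; this allows both $u_0(g)$ and $v(h)$ to be replaced by arbitrary solutions of $\Delta u + k^2(1+q_0)u = 0$ on a neighborhood of $D$. Specializing to complex-geometrical-optics solutions of the background equation, in the same spirit as the classical linear uniqueness proofs cited above, one recovers the Fourier coefficients of $q_m^{(1)}-q_m^{(2)}$ at every wavevector in $\Rd$, whence $q_m^{(1)} = q_m^{(2)}$ a.e.\ on $D$.

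\textbf{Main obstacle.} The principal difficulty lies in the second step. Because the exponents $0 < \alpha_1 < \cdots < \alpha_L$ need not be rationally related, iterating the fixed-point map yields cross-terms of orders $1 + \sum_l n_l \alpha_l$ with $n_l \in \N_0$, and one has to check carefully that every such cross-term of order $\leq 1+\alpha_m$ that is not $B_m^{(j)}$ itself depends only on $q_0, \ldots, q_{m-1}$, so that the $t^{1+\alpha_m}$-coefficient truly isolates $q_m^{(j)}$. This amounts to a fractional Fr\'{e}chet-type differentiability of $V^{(j)}$ at zero that sharpens the Hölder estimates~\eqref{eq:VHoelderLinfty}--\eqref{eq:VHoelderLtwo}. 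The subsequent density/CGO step is a variant of the standard Calder\'{o}n argument, and the extra nonlinear factor $|u_0(g)|^{\alpha_m}$ can be absorbed by an appropriate choice of the CGO parameters.
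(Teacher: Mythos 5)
Your overall architecture --- recover $q_0$ by linearizing $F$ at $g=0$ and invoking the classical linear uniqueness theorem, then determine $q_1,\dots,q_L$ recursively, ending with Rellich's lemma, an orthogonality identity over $D$, and the density of Herglotz-generated total fields --- is the same as the paper's. The first genuine gap is the engine of your induction step. You base it on an asymptotic expansion of $V^{(j)}(tHg)$ in the fractional powers $t^{1+\sum_l n_l\alpha_l}$, sharp enough to identify the coefficient of $t^{1+\alpha_m}$, and you yourself flag that this ``fractional Fr\'echet differentiability'' is not established; it is indeed nontrivial (think of resonant exponents such as $\alpha_m=2\alpha_1$ or $\alpha_m=\alpha_1+\alpha_{m-1}$, where cross-terms generated by the lower-order nonlinearities land exactly at order $t^{1+\alpha_m}$, so that even the existence of a well-defined coefficient must be proved). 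The paper neither proves nor needs such an expansion: it subtracts the two nonlinear Lippmann--Schwinger equations, rewrites the terms carrying the already-identified coefficients $q_0,\dots,q_{m-1}$ as $\qtilde_{f,m-1}\,\bigl(V^{(1)}(f)-V^{(2)}(f)\bigr)$ with a difference-quotient contrast satisfying $\|\qtilde_{f,m-1}-q_0\|_\LinftyD\le C\|f\|_\LinftyD^{\alpha_1}$ (via Lemma~\ref{lmm:UsefulEstimate}), inverts $I-k^2\Phi_k\ast(\qtilde_{f,m-1}\ph)$ by a perturbation argument, and concludes $\|V^{(1)}(f)-V^{(2)}(f)\|_\LinftyD\le C\|f\|_\LinftyD^{\alpha_m+1}$ directly from the H\"older estimates \eqref{eq:VStabilityLinfty} and \eqref{eq:VHoelderLinfty}; the only further input about the remainder $R(f)$ is the leading-order identification \eqref{eq:OrthogonalityR(f)b}--\eqref{eq:OrthogonalityR(f)c}, again a consequence of those bounds. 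Unless you either prove your expansion or switch to this difference argument, the induction step is incomplete.

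The second gap is the end game. Your orthogonality identity contains $u_0(g)$ both inside the modulus and as a linear factor, and you propose to finish by choosing CGO solutions so that $|u_0(g)|^{\alpha_m}$ is ``absorbed''. As stated this fails: with $u_0\approx e^{\rho_1\cdot x}$, $v\approx e^{\rho_2\cdot x}$, $\rho_j=a_j+\rmi b_j$, $a_j\cdot b_j=0$, $|b_j|^2=|a_j|^2+k^2$, the weight contributes the real exponential $e^{\alpha_m a_1\cdot x}$, so producing a fixed Fourier mode requires $a_2=-(1+\alpha_m)a_1$ and $b_1+b_2=\xi$; but then $|b_1+b_2|\ge |b_2|-|b_1|\approx\alpha_m|a_1|$, which forces $|a_1|$ to stay bounded and destroys the large-parameter CGO limit. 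Moreover, even before any CGO argument you must decouple the linear factor from the modulus, which your identity does not do. The paper handles this by polarization: it inserts $f=f_1+tf_2$, differentiates at $t=0$, and also uses $\rmi f_1$, which yields \eqref{eq:Orthogonalityq1minusq2} with a free factor $V_0f_2+f_2$ and the fixed weight $|V_0f_1+f_1|^{\alpha_m}$; it then applies the classical linear ``density of products of solutions'' uniqueness result as a black box, with $(q_m^{(1)}-q_m^{(2)})|V_0f_1+f_1|^{\alpha_m}$ playing the role of the unknown contrast, concludes that this weight vanishes, and a second application of the same density result gives $q_m^{(1)}=q_m^{(2)}$. Incorporating the polarization step and replacing the CGO absorption by this (or an equivalent) argument is needed to close your proof.
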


\begin{proof}
  By linearization around zero we first show that
  $q_0^{(1)}=q_0^{(2)}$ in \eqref{eq:MaterialLawUniqueness}. 
  We consider factorizations of the far field operators 
  $F^{(j)}=H^* T^{(j)}(H)$, $j=1,2$, as in 
  Proposition~\ref{pro:NonlinearFactorization}, where $H$ and~$H^*$
  are the Herglotz operator and its adjoint from
  \eqref{eq:HerglotzOperator} and \eqref{eq:HerglotzAdjoint}, and the 
  operator $T^{(j)}$ is as in \eqref{eq:OperatorT} with $q$ replaced
  by $q^{(j)}$.
  Furthermore, we denote by $T_0^{(j)}$ the bounded linear operator
  from \eqref{eq:DefT0} with $q_0$ replaced by $q_0^{(j)}$.
  Then \eqref{eq:BoundTMinusT0Linfty} shows that 
  $T^{(j)}(f) = T_0^{(j)} f + O(\|f\|_\LinftyD^{\alpha_1+1})$
  as~$\|f\|_\LinftyD\to 0$. 
  Recalling \eqref{eq:BoundHerglotzLinfty}, we obtain from
  $F^{(1)}=F^{(2)}$ that 
  \begin{equation*}
    F_0^{(1)}
    \,=\, H^* T_0^{(1)} H 
    \,=\, H^* T_0^{(2)} H 
    \,=\, F_0^{(2)} \,,
  \end{equation*}
  where $F_0^{(j)}$ is the linear far field operator corresponding to
  the contrast function $q_0^{(j)}$, $j=1,2$.
  The uniqueness of solutions to the inverse medium scattering problem
  for the linear Helmholtz equation  (see, e.g.,
  \cite[Thm.~7.28]{Kir21} or \cite{Buk08,Nac88,Nov88,Ram88}) implies
  that $q_0^{(1)}=q_0^{(2)}=:q_0$.
  In particular we conclude that $T_0^{(1)}=T_0^{(2)}=:T_0$. 

  To prove the theorem by induction, we now assume 
  $q_l^{(1)}=q_l^{(2)}=:q_l$ for $l=0,\ldots,m-1$, where
  $m\in\{1,\ldots,L\}$. 
  The nonlinear Lippmann-Schwinger equation \eqref{eq:GeneralLS}
  gives, for $f\in U_{\delta}$ and $j=1,2$,  
  \begin{equation*}
    V^{(j)}(f) 
    \,=\, k^2\Phi_k\ast q^{(j)}\bigl(\ph,|V^{(j)}(f)+f|\bigr)(V^{(j)}(f)+f)  
    \qquad \text{in } {D} \,.
  \end{equation*}
  Here,  $V^{(j)}(f)$ stands for the solution map $V(f)$ from
  Proposition~\ref{pro:Wellposedness} with $q$ replaced by $q^{(j)}$.
  Setting $\alpha_0:= 0$ and using \eqref{eq:MaterialLawUniqueness} we
  obtain that 
  \begin{equation}
    \label{eq:ProofUniqueness7}
    \begin{split}
      &V^{(1)}(f)-V^{(2)}(f) \\ 
      &\,=\, k^2\Phi_k\ast \bigl( 
      q^{(1)}(\ph,|V^{(1)}(f)+f|)(V^{(1)}(f)+f)
      - q^{(2)}(\ph,|V^{(2)}(f)+f|)(V^{(2)}(f)+f) \bigr) \\
      &\,=\, k^2\Phi_k\ast \biggl( \sum_{l=0}^{m-1} q_l
      \Bigl( |V^{(1)}(f)+f|^{\alpha_l}(V^{(1)}(f)+f) 
      - |V^{(2)}(f)+f|^{\alpha_l}(V^{(2)}(f)+f) \Bigr) \biggr) \\
      &\phantom{\,=\,}
      + k^2\Phi_k\ast \biggl( \sum_{l=m}^{L} \Bigl(
      q_l^{(1)} |V^{(1)}(f)+f|^{\alpha_l}(V^{(1)}(f)+f) 
      - q_l^{(2)}|V^{(2)}(f)+f|^{\alpha_l}(V^{(2)}(f)+f) 
      \Bigr) \biggr)
    \end{split}
  \end{equation}
  in ${D}$.
  Applying Lemma~\ref{lmm:UsefulEstimate} and
  \eqref{eq:VStabilityLinfty} we find that, for $l=1,\ldots,m-1$,
  \begin{equation}
    \label{eq:ProofUniqueness8}
    \begin{split}
      \bigl| |V^{(1)}(f)+f|^{\alpha_l}(V^{(1)}(f)+f) 
      -& |V^{(2)}(f)+f|^{\alpha_l}(V^{(2)}(f)+f) \bigr|\\
      &\,\leq\, C \bigl( |f| + |V^{(1)}(f)| + |V^{(2)}(f)| \bigr)^{\alpha_l}
      \bigl| V^{(1)}(f) - V^{(2)}(f) \bigr| \\
      &\,\leq\, C \|f\|_\LinftyD^{\alpha_l}
      \bigl| V^{(1)}(f) - V^{(2)}(f) \bigr| \,.
    \end{split}
  \end{equation}
  Accordingly, 
  \begin{multline*}
    \sum_{l=0}^{m-1} q_l \Bigl( |V^{(1)}(f)+f|^{\alpha_l}(V^{(1)}(f)+f) 
    - |V^{(2)}(f)+f|^{\alpha_l}(V^{(2)}(f)+f) \Bigr) \\
    \,=\, \qtilde_{f,m-1} \bigl(V^{(1)}(f) - V^{(2)}(f)\bigr) \,,
  \end{multline*}
  where $\qtilde_{f,m-1}\in L^\infty(\Rd)$ is given by
  \begin{equation*}
    \qtilde_{f,m-1}
    := q_0
    + \sum_{l=1}^{m-1} \! q_l \frac{
      |V^{(1)}(f)+f|^{\alpha_l}(V^{(1)}(f)+f) 
      - |V^{(2)}(f)+f|^{\alpha_l}(V^{(2)}(f)+f) }
    {V^{(1)}(f) - V^{(2)}(f)} {\bf 1}_{V^{(1)}(f) \neq V^{(2)}(f)} \,.
  \end{equation*}
  We note that $\qtilde_{f,m-1}$ is supported in $\ol{D}$ and
  \eqref{eq:ProofUniqueness8} implies that
  \begin{equation}
    \label{eq:ProofUniqueness9}
    \|\qtilde_{f,m-1}-q_0\|_\LinftyD 
    \,\leq \, C\|f\|_\LinftyD^{\alpha_1} \,.
  \end{equation}
  Hence, for $f\in U_{\delta}$ such that $\|f\|_\LinftyD$ is
  sufficiently small, we conclude from \eqref{eq:EstLSLinfty} that the
  operator $I-k^2\Phi_k\ast (\qtilde_{f,m-1}\ph):\LinftyD\to \LinftyD$
  is invertible with a uniform bound for the operator norm of the
  inverse (see, e.g., \cite[Thm.~10.1]{Kre14}). 
  Denoting 
  \begin{equation}
    \label{eq:DefR(f)}
    R(f) 
    \,:=\, \sum_{l=m}^L
    \Bigl( q_l^{(1)} |V^{(1)}(f)+f|^{\alpha_l}(V^{(1)}(f)+f) 
    - q_l^{(2)}|V^{(2)}(f)+f|^{\alpha_l}(V^{(2)}(f)+f) \Bigr)
  \end{equation}
  we find from \eqref{eq:ProofUniqueness7}, Young's inequality, and
  \eqref{eq:VStabilityLinfty} that 
  \begin{equation}
    \label{eq:ProofUniqueness11}
    \begin{split}
      \bigl\|V^{(1)}(f)-V^{(2)}(f)\bigr\|_\LinftyD
      &\,=\, \bigl\| \bigl( I-k^2\Phi_k\ast (\qtilde_{f,m-1}\ph) \bigr)^{-1}
      (k^2\Phi_k\ast  R(f)) \bigr\|_\LinftyD \\
      &\,\leq\, C \|k^2\Phi_k\ast  R(f)\|_\LinftyD \\
      &\,\leq\, C \|\Phi_k\|_{L^1(B_{2R}(0))} \|R(f)\|_\LinftyD \\
      &\,\leq\, C \bigl(
      \| V^{(1)}(f) + f \|_\LinftyD^{\alpha_m+1}
      + \| V^{(2)}(f) + f \|_\LinftyD^{\alpha_m+1} \bigr) \\
      &\,\leq\, C \|f\|_\LinftyD^{\alpha_m+1} \,.
    \end{split}
  \end{equation}
  Here, $R>0$ was chosen such that $D\tm\BR$.
  
  Next we want to use \eqref{eq:ProofUniqueness11} in order to deduce 
  $q_m^1= q_m^2$. Set $w_f:= V^{(1)}(f)-V^{(2)}(f)$. 
  By assumption we know that the far field of $w_f$ vanishes
  whenever~$f$ is a sufficiently small Herglotz wave, i.e., when 
  $f\in U_{\delta}\cap\Rcal(H)$ with $\delta>0$ as in
  Proposition~\ref{pro:Wellposedness}. 
  Moreover, we find as in the proof of
  Lemma~\ref{lmm:LippmannSchwinger} that~\eqref{eq:ProofUniqueness7} 
  implies
  \begin{equation*}
    \Delta w_f + k^2(1+\qtilde_{f,m-1})w_f 
    \,=\, -k^2 R(f) \qquad \text{in } \Rd \,,
  \end{equation*}
  in particular $\Delta w_f+k^2w_f= 0$ in $\Rd\setminus\ol{\BR}$ and
  $w_f$ is radiating. 
  So Rellich's lemma (see, e.g., \cite[Lmm.~2.12]{ColKre19}) gives 
  $w_f = 0$ in $\Rd\setminus\ol{\BR}$.
  Now let $v\in H^2(\BR)$ be any solution of
  $\Delta v+k^2(1+q_0)v= 0$ in $\BR$. 
  Then, for all $f\in U_{\delta}\cap\Rcal(H)$,
  \begin{equation*}
    \begin{split}
      0
      &\,=\, \int_{\di\BR} \Bigl( w_f \frac{\di v}{\di\nu} 
      - v\frac{\di w_f}{\di\nu} \Bigr) \ds \\
      &\,=\, \int_{\BR} \bigl( w_f \Delta v - v\Delta w_f \bigr) \dx \\ 
      &\,=\, \int_{\BR} \Bigl( w_f \bigl(-k^2(1+q_0)v\bigr) 
      - v\bigl(-k^2(1+\qtilde_{f,m-1})w_f-k^2R(f)\bigr) \Bigr) \dx \\
      &\,=\, \int_{\BR} v 
      \bigl( k^2R(f) + k^2(\qtilde_{f,m-1}-q_0)w_f \bigr) \dx  \\
      &\,=\, \int_{D} v 
      \bigl( k^2R(f) + k^2(\qtilde_{f,m-1}-q_0)w_f \bigr) \dx \,. 
    \end{split}
  \end{equation*}
  In the last equality we used that $R(f)$ and $\qtilde_{f,m-1}-q_0$
  are supported in $\ol{D}$ by our assumption on the nonlinear
  contrast function. 
  In \eqref{eq:ProofUniqueness11} we found that
  $\|w_f\|_\LinftyD \leq C \|f\|_\LinftyD^{\alpha_m+1}$,  
  and combining this with \eqref{eq:ProofUniqueness9} gives
  \begin{equation}
    \label{eq:OrthogonalityR(f)a}
    0
    \,=\, \int_{D} v \, R(f) \dx  
    + O\bigl(\|f\|_\LinftyD^{\alpha_m+\alpha_1+1}\bigr) 
    \qquad \text{as } \|f\|_\LinftyD \to 0 \,.
  \end{equation}
  Next we identify the leading order term in $R(f)$. 
  Using Lemma~\ref{lmm:UsefulEstimate} and \eqref{eq:VStabilityLinfty}, 
  \eqref{eq:VHoelderLinfty} we obtain that, for~$j=1,2$, 
  \begin{equation}
    \label{eq:OrthogonalityR(f)b}
    \begin{split}
      &\bigl| |V^{(j)}(f)+f|^{\alpha_m}(V^{(j)}(f)+f) 
      -|V_0f+f|^{\alpha_m}(V_0f+f) \bigr|\\
      &\,\leq\, C \bigl( |f| + |V^{(j)}(f)| + |V_0f| \bigr)^{\alpha_m}
      \bigl| V^{(j)}(f) - V_0^{(j)}f \bigr| \\
      &\,\leq\, C \|f\|_\LinftyD^{\alpha_m+\alpha_1+1} \,. 
    \end{split}
  \end{equation}
  Similarly, we find that, for $j=1,2$ and $l=m+1,\ldots,L$,
  \begin{equation}
    \label{eq:OrthogonalityR(f)c}
    \bigl| |V^{(j)}(f)+f|^{\alpha_l}(V^{(j)}(f)+f) \bigr|
    \,\leq\, C \|f\|_\LinftyD^{\alpha_{m+1}+1} \,.
  \end{equation}
  Substituting \eqref{eq:DefR(f)} into \eqref{eq:OrthogonalityR(f)a},
  and applying
  \eqref{eq:OrthogonalityR(f)b}--\eqref{eq:OrthogonalityR(f)c} gives 
  \begin{equation*}
    0
    \,=\, \int_{D} v \, 
    \bigl( q_m^{(1)} - q_m^{(2)} \bigr)
    |V_0f+f|^{\alpha_m}(V_0f+f) 
    \dx  
    + O\bigl(\|f\|_\LinftyD^{\alpha_m+\alpha_1+1}\bigr)
    + O\bigl(\|f\|_\LinftyD^{\alpha_{m+1}+1}\bigr) \,.
  \end{equation*}
  as $\|f\|_\LinftyD\to 0$. 
  Hence, for all $f\in U_{\delta}\cap\Rcal(H)$,
  \begin{equation*}
    0
    \,=\, \int_{D} v \, \bigl(q_m^{(1)}-q_m^{(2)}\bigr)
    |V_0f+f|^{\alpha_m} (V_0f+f) \dx \,.
  \end{equation*}
  Setting $f=f_1+tf_2$ for $f_1,f_2\in U_{\delta}\cap\Rcal(H)$ and
  differentiating with respect to $t$ gives 
  \begin{equation*}
    \begin{split}
      0 
      \,=\, \int_{D} v \bigl(q_m^{(1)}-q_m^{(2)}\bigr)
      Z(f_1,f_2) \dx \,,
    \end{split}
  \end{equation*}
  where
  \begin{equation*}
    \begin{split}
      Z(f_1,f_2)
      &\,:=\, \Bigl(1+\frac{\alpha_m}{2}\Bigr)|V_0f_1+f_1|^{\alpha_m} 
      (V_0f_2+f_2)\\
      &\phantom{\,:=\,}
      + \frac{\alpha_m}{2} |V_0f_1+f_1|^{\alpha_m-2}(V_0f_1+f_1)^2\,
      \ol{(V_0f_2+f_2)} \,.
    \end{split}
  \end{equation*}
  Since $\rmi f_1 \in U_{\delta}\cap\Rcal(H)$, too, we even get
  \begin{equation}
    \label{eq:Orthogonalityq1minusq2}
    \begin{split}
      0 
      &\,=\, \int_{D} v \, \bigl(q_m^{(1)}-q_m^{(2)}\bigr) 
      \bigl( Z(f_1,f_2) + Z(\rmi f_1,f_2) \bigr) \dx \\
      &\,=\, (2+\alpha_m) \int_{D} v \, 
      \bigl(q_m^{(1)}-q_m^{(2)}\bigr) |V_0f_1+f_1|^{\alpha_m} 
      (V_0f_2+f_2) \dx \,.
    \end{split}
  \end{equation}
  Next we recall that the span of all total fields $f+V_0f$ that
  correspond to radiating solutions~$V_0f$ of the linear scattering
  problem \eqref{eq:LinearSP} with Herglotz incident fields $f=Hg$,
  $g\in\LtwoSd$, is dense in the space of solutions to the linear
  Helmholtz equation in
  \begin{equation*}
    \Delta \vtilde + k^2(1+q_0)\vtilde 
    \,=\, 0 \qquad \text{in } \BR 
  \end{equation*}
  with respect to the $\LtwoBR$-norm where $D\subset \BR$ (see
  \cite[Thm.~7.24]{Kir21}, where this result has been shown for plane
  wave incident fields instead of Herglotz incident fields).
  Since $f_1,f_2\in U_{\delta}\cap\Rcal(H)$ have been arbitrary in
  \eqref{eq:Orthogonalityq1minusq2}, we get for all solutions
  $v,\vtilde\in H^2(\BR)$ of $\Delta v + k^2(1+q_0)v = 0$ in $\BR$
  that 
  \begin{equation*}
    0 
    \,=\,  \int_{D} v \vtilde 
    \bigl(q_m^{(1)}-q_m^{(2)}\bigr) |V_0f_1+f_1|^{\alpha_m} \dx \,.
  \end{equation*}
  This gives $(q_m^{(1)}-q_m^{(2)})|V_0f_1+f_1|^{\alpha_m}=0$ for any
  $f_1 \in U_{\delta}\cap\Rcal(H)$ (see, e.g.,
  \cite[Thm.~7.27]{Kir21} or \cite{Buk08,Nac88,Nov88,Ram88}).
  From this we infer $(q_m^{(1)}-q_m^{(2)})(V_0f_1+f_1)=0$ for any
  $f_1 \in U_{\delta}\cap\Rcal(H)$ and thus
  \begin{equation*}
    \int_D (q_m^{(1)}-q_m^{(2)})(V_0f_1+f_1)(V_0f_2+f_2) \dx
    \,=\, 0 
  \end{equation*}
  for any given $f_1,f_2 \in U_{\delta}\cap\Rcal(H)$.
  The density result used above shows   
  $q_m^{(1)}-q_m^{(2)}=0$ a.e.\ in~$D$, and thus
  $q_m^{(1)}=q_m^{(2)}$. 
  So the claim is proven by induction.
\end{proof}

\section{The nonlinear factorization method}
\label{sec:FactorizationMethod}
In this section we discuss a generalization of the factorization
method to recover the shape of a nonlinear scattering object from
observations of the corresponding nonlinear far field operator. 
We consider general nonlinear contrast functions 
$q\in L^\infty(\Rd\times\R)$ as in Section~\ref{sec:Setting}, but here
we make the following slightly stronger assumptions. 

\begin{assumption}
  \label{ass:Index2}
  Let $D$ be open and Lipschitz bounded such that
  $\Rd\setminus\ol{D}$ is connected. 
  Then the nonlinear contrast function $q\in L^\infty(\Rd\times\R)$
  shall satisfy Assumption~\ref{ass:Index1}, and
  \begin{itemize}
  \item[(i)] $\supp(q) \tm \ol{D}\times\R$,
  \item[(ii)] $\supp(q_0) = \ol{D}$ with
    $q_0 \geq \qmin > 0$ a.e.\ in $D$ for some $\qmin>0$,
  \item[(iii)] the wave number $k^2$ is such that the homogeneous
    linear transmission eigenvalue problem to determine 
    $v,w\in \LtwoD$, $(v,w)\not=(0,0)$ with
    \begin{align*}
      \Delta v + k^2 v 
      &\,=\, 0 \quad \text{in } D \,, 
      & v 
      &\,=\, w \quad \text{on } \di D \,,\\
      \Delta w + k^2 (1+q_0) w 
      &\,=\, 0 \quad \text{in } D \,, 
      & \frac{\di v}{\di\nu} 
      &\,=\, \frac{\di w}{\di\nu} \quad \text{on } \di D \,,
    \end{align*}
    (see, e.g., \cite[Def.~7.21]{Kir21}) has no
    nontrivial solution.
  \end{itemize}
\end{assumption}

A factorization method for nonlinear weakly scattering objects and for
scattering objects with small nonlinearity of linear growth has
already been discussed in \cite{Lec11}.
In constrast to this work, we consider a larger class of nonlinear
refractive indices without any smallness assumption on the a priori
unknown  nonlinearity, but we assume that the incident fields that are
used for the reconstruction are small relative to the size of the
nonlinearity. 

Let $\delta>0$ be as in Proposition~\ref{pro:Wellposedness}. 
We consider the nonlinear far field operator~$F$
from~\eqref{eq:FarfieldOperator} with the factorization $F=H^*T(H)$
from Proposition~\ref{pro:NonlinearFactorization}.
The next theorem is a nonlinear version of the abstract inf-criterion 
of the factorization method to describe the range of $H^*$ in terms
of~$F$. 
This result has been established in \cite[Thm.~2.1]{Lec11}. 
The proof is essentially the same as in the linear case (see, e.g.,
\cite[Lmm.~7.33]{Kir21}).

\begin{theorem}
  \label{thm:AbstractInfCriterion}
  Let $X$ and $Y$ be Hilbert spaces, $\rho>0$, and let 
  \begin{equation*}
    \Fcal:\Dcal(\Fcal) 
    \,:=\, \{ g\in X \;|\; \|g\|_X \leq \rho \} 
    \tm X \to X  
  \end{equation*}
  be a nonlinear operator.
  We assume that $\Fcal = \Hcal^*\Tcal(\Hcal)$, where $\Hcal:Y\to X$
  is a compact linear operator and $\Tcal: \Dcal(\Tcal)\tm Y\to Y$
  with $\Dcal(\Tcal) = \ol{\Hcal(\Dcal(\Fcal))}$ satisfies
  \begin{equation*}
    \|\Tcal(\Hcal g)\|_Y 
    \,\leq\, C_* \|\Hcal g\|_Y 
  \end{equation*}
  and
  \begin{equation*}
    |\skp{\Tcal(\Hcal g)}{\Hcal g}_Y| 
    \,\geq\, c_* \|\Hcal g\|_Y^2
  \end{equation*}
  for all $g\in\Dcal(\Fcal)$ with $\|g\|_X\leq\rho$ and some
  $c_*,C_*>0$. 
  Then, for any $\phi\in X$, $\phi\not=0$, and
  any~$0<\rhotilde\leq\rho$,
  \begin{equation}
    \label{eq:InfCharacterization}
    \phi \in\Rcal(\Hcal^*) 
    \quad\Llra\quad
    \inf \biggl\{ 
    \Bigl| \frac{\skp{\Fcal(g)}{g}_X}{\skp{g}{\phi}_X^2} \Bigr|
    \;\bigg|\; g\in \Dcal(\Fcal)\tm X \,,\; \|g\|_X=\rhotilde \,,\; 
    \skp{g}{\phi}_X \not=0 \biggr\}
    \,>\, 0 \,.
  \end{equation}
\end{theorem}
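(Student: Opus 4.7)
The plan is to reduce the statement to the linear Kirsch--Grinberg range identification, observing that the nonlinearity of $\Tcal$ enters only through two scalar inequalities.
First I would introduce the scale-invariant quotient
\begin{equation*}
  R(g)
  \,:=\, \frac{\|\Hcal g\|_Y^2}{|\skp{g}{\phi}_X|^2}
  \qquad \text{for } g\in\Dcal(\Fcal) \text{ with } \skp{g}{\phi}_X\neq 0 \,,
\end{equation*}
and combine the factorization $\Fcal(g)=\Hcal^*\Tcal(\Hcal g)$, the adjoint identity $\skp{\Hcal^*\Tcal(\Hcal g)}{g}_X=\skp{\Tcal(\Hcal g)}{\Hcal g}_Y$, and the two hypotheses (with Cauchy--Schwarz for the upper bound on $|\skp{\Tcal(\Hcal g)}{\Hcal g}_Y|$) into the sandwich
\begin{equation*}
  c_* R(g)
  \,\leq\, \Bigl| \frac{\skp{\Fcal(g)}{g}_X}{\skp{g}{\phi}_X^2} \Bigr|
  \,\leq\, C_* R(g) \,.
\end{equation*}
Since $R$ is homogeneous of degree zero, the restriction to the sphere $\|g\|_X=\rhotilde$ in \eqref{eq:InfCharacterization} is inessential, and it suffices to prove the equivalence $\phi\in\Rcal(\Hcal^*)\Llra\inf R>0$.

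For the forward direction I would argue directly.
If $\phi=\Hcal^*\psi$ for some $\psi\in Y$, then $\skp{g}{\phi}_X=\skp{\Hcal g}{\psi}_Y$, and Cauchy--Schwarz yields $|\skp{g}{\phi}_X|^2\leq\|\Hcal g\|_Y^2\|\psi\|_Y^2$, hence $R(g)\geq 1/\|\psi\|_Y^2>0$ wherever $R$ is defined.
The sandwich bound transfers this into the strictly positive lower bound $c_*/\|\psi\|_Y^2$ for the quantity inside the infimum in \eqref{eq:InfCharacterization}.

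The reverse direction I would prove by contrapositive.
Assuming $\phi\notin\Rcal(\Hcal^*)$, I would invoke Douglas' range inclusion lemma applied to the rank-one map $\C\ni t\mapsto t\phi\in X$ and the bounded operator $\Hcal^*:Y\to X$: the membership $\phi\in\Rcal(\Hcal^*)$ is equivalent to the existence of $\lambda>0$ with $|\skp{g}{\phi}_X|\leq\lambda\|\Hcal g\|_Y$ for all $g\in X$.
Under our assumption this estimate fails, so for each $n\in\N$ I can choose $g_n\in X$ with $\skp{g_n}{\phi}_X\neq 0$ and $|\skp{g_n}{\phi}_X|^2\geq n\|\Hcal g_n\|_Y^2$, i.e.\ $R(g_n)\leq 1/n$.
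Homogeneity of $R$ allows me to rescale to $\|g_n\|_X=\rhotilde$, placing each $g_n$ on the admissible sphere inside $\Dcal(\Fcal)$, and the sandwich bound yields
\begin{equation*}
  \Bigl| \frac{\skp{\Fcal(g_n)}{g_n}_X}{\skp{g_n}{\phi}_X^2} \Bigr|
  \,\leq\, C_* R(g_n)
  \,\leq\, \frac{C_*}{n}
  \,\longrightarrow\, 0 \,,
\end{equation*}
so the infimum in \eqref{eq:InfCharacterization} vanishes, completing the contrapositive.

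The main obstacle is this reverse direction, where a purely qualitative non-membership must be turned into an explicit minimizing sequence; Douglas' lemma supplies the needed functional-analytic input and handles uniformly both the case $\phi\notin\ol{\Rcal(\Hcal^*)}$ and the more delicate case $\phi\in\ol{\Rcal(\Hcal^*)}\setminus\Rcal(\Hcal^*)$.
A minor technicality is the degenerate situation $\Hcal g_n=0$ with $\skp{g_n}{\phi}_X\neq 0$: here the upper bound hypothesis forces $\Tcal(0)=0$, so $\Fcal(g_n)=0$ and the quotient vanishes outright, consistently with $R(g_n)=0$.
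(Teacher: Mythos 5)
Your proposal is correct. The forward implication is the same as in the paper: with $\phi=\Hcal^*\psi$ you combine the coercivity hypothesis with Cauchy--Schwarz to get the lower bound $c_*/\|\psi\|_Y^2$, exactly as done there. The reverse implication, however, follows a genuinely different route. The paper argues by hand: it shows that $\{\Hcal g \,|\, \skp{g}{\phi}_X=0\}$ is dense in $\Rcal(\Hcal)$ (using $\phi\notin\Rcal(\Hcal^*)$ and $\Rcal(\Hcal)\cap\Ncal(\Hcal^*)=\{0\}$), and from this density explicitly constructs a sequence $\ghat_n$ with $\skp{\ghat_n}{\phi}_X=1$ and $\Hcal\ghat_n\to 0$, which after normalization drives the quotient to zero via the boundedness hypothesis. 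You instead invoke the range-inclusion (Douglas) criterion, i.e.\ $\phi\in\Rcal(\Hcal^*)$ if and only if $|\skp{g}{\phi}_X|\leq\lambda\|\Hcal g\|_Y$ for all $g\in X$ and some $\lambda>0$, so that non-membership immediately yields a sequence with $R(g_n)=\|\Hcal g_n\|_Y^2/|\skp{g_n}{\phi}_X|^2\to 0$; the zero-homogeneity of $R$ lets you rescale onto the sphere $\|g\|_X=\rhotilde$ inside $\Dcal(\Fcal)$, and your sandwich $c_*R(g)\leq|\skp{\Fcal(g)}{g}_X|/|\skp{g}{\phi}_X|^2\leq C_*R(g)$ (valid on $\Dcal(\Fcal)$, with the upper bound from $\|\Tcal(\Hcal g)\|_Y\leq C_*\|\Hcal g\|_Y$ and Cauchy--Schwarz) finishes both directions; your treatment of the degenerate case $\Hcal g=0$, $\skp{g}{\phi}_X\neq 0$ via $\Tcal(0)=0$ is also sound. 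What each approach buys: the paper's argument is self-contained and exhibits the minimizing sequence concretely with only elementary Hilbert-space manipulations (mirroring the linear proof it cites), while yours is shorter and cleanly separates the functional-analytic core (range membership as a boundedness inequality) from the nonlinear bookkeeping, at the price of importing Douglas' lemma, whose proof is essentially the same density/closed-range argument the paper carries out inline. Neither proof uses the compactness of $\Hcal$, and both handle uniformly the cases $\phi\notin\ol{\Rcal(\Hcal^*)}$ and $\phi\in\ol{\Rcal(\Hcal^*)}\setminus\Rcal(\Hcal^*)$.
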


\begin{proof}
  Let $0\not=\phi=\Hcal^*\psi\in\Rcal(\Hcal^*)$ for some $\psi\in Y$.
  Then $\psi\not=0$, and for any $g\in\Dcal(\Fcal)\tm X$ with 
  $\|g\|_X=\rhotilde\leq\rho$ and $\skp{g}{\phi}_X\not=0$ we
  find that 
  \begin{equation*}
    \begin{split}
      |\skp{\Fcal(g)}{g}_X|
      &\,=\, |\skp{\Hcal^*\Tcal(\Hcal g)}{g}_X|
      \,=\, |\skp{\Tcal(\Hcal g)}{\Hcal g}_Y|
      \,\geq\, c_* \|\Hcal g\|_Y^2\\
      &\,=\, \frac{c_*}{\|\psi\|_Y^2} \|\Hcal g\|_Y^2 \|\psi\|_Y^2
      \,\geq\, \frac{c_*}{\|\psi\|_Y^2} |\skp{\Hcal g}{\psi}_Y|^2\\
      &\,=\, \frac{c_*}{\|\psi\|_Y^2} |\skp{g}{\Hcal^*\psi}_X|^2
      \,=\, \frac{c_*}{\|\psi\|_Y^2} |\skp{g}{\phi}_X|^2
    \end{split}
  \end{equation*}
  Thus we have found a positive lower bound for the infimum in
  \eqref{eq:InfCharacterization}. 
  
  Now let $0\not=\phi\not\in\Rcal(\Hcal^*)$.
  We first show that the subspace
  $\{ \Hcal g \;|\; g\in X \,,\; \skp{g}{\phi}_X=0 \}$ is
  dense in $\Rcal(\Hcal)$. 
  Let $\psi\in\Rcal(\Hcal)$ such that 
  $0=\skp{\Hcal g}{\psi}_Y=\skp{g}{\Hcal^*\psi}_X$ for all $g\in X$
  with $\skp{g}{\phi}_X=0$. 
  That means $\Hcal^*\psi\in\spann\{\phi\}$, and because
  $\phi\not\in\Rcal(\Hcal^*)$, we conclude that $\Hcal^*\psi=0$. 
  Therefore $\psi \in \Rcal(\Hcal)\cap\Ncal(\Hcal^*)$, i.e., $\psi=0$, and we
  have shown that 
  $\{ \Hcal g \;|\; g\in X \,,\; \skp{g}{\phi}_X=0 \}$ is dense in
  $\Rcal(\Hcal)$. 
  Since $\Hcal\phi/\|\phi\|_Y^2\in\Rcal(\Hcal)$, we can find a sequence
  $(\gtilde_n)_n \tm \{ g \in X \;|\; \skp{g}{\phi}_X=0 \}$ such 
  that~$\Hcal\gtilde_n\to-\Hcal\phi/\|\phi\|_X^2$. 
  Setting $\ghat_n := \gtilde_n+\phi/\|\phi\|_X^2$ this yields 
  $\skp{\ghat_n}{\phi}_X=1$ and $\Hcal\ghat_n\to 0$ as~$n\to\infty$.
  Thus, we define 
  $g_n := \rhotilde\, \ghat_n/\|\ghat_n\|_X \in \Dcal(\Fcal)$
  to obtain 
  \begin{equation*}
    \Bigl| \frac{\skp{\Fcal(g_n)}{g_n}_X}{\skp{g_n}{\phi}_X^2} \Bigr|
    \,=\, \frac{|\skp{\Tcal(\Hcal g_n)}{\Hcal g_n}_Y|}
    {|\skp{g_n}{\phi}_X|^2}
    \,\leq\, \frac{C_* \|\Hcal g_n\|_Y^2}{|\skp{g_n}{\phi}_X|^2}
    \,=\, \frac{C_* \|\Hcal\ghat_n\|_Y^2}{|\skp{\ghat_n}{\phi}_X|^2}
    \to 0 \qquad \text{as } n\to\infty \,,
  \end{equation*}
  i.e., the infimum in \eqref{eq:InfCharacterization} is zero. 
\end{proof}

Next we show that the operator $T$ from
Proposition~\ref{pro:NonlinearFactorization} satisfies the assumptions
in Theorem~\ref{thm:AbstractInfCriterion}. 

\begin{proposition}
  \label{pro:BoundsT}
  Suppose that Assumption~\ref{ass:Index2} holds, and let $\delta>0$
  be as in Proposition~\ref{pro:Wellposedness}. 
  Then there are constants $c_*,C_*,C>0$ such that 
  \begin{subequations}
    \label{eq:BoundsT}
    \begin{align}
      \|T(f)\|_\LtwoD 
      &\,\leq\, C_* \bigl( 1 + \|f\|_\LinftyD^\alpha \bigr) 
        \|f\|_\LtwoD \,,  \label{eq:BoundT} \\
      |\skp{T(f)}{f}_\LtwoD| 
      &\,\geq\, c_* \bigl( 1 - C \|f\|_\LinftyD^\alpha \bigr) 
        \|f\|_\LtwoD^2  \label{eq:CoercivityT}
    \end{align}
  \end{subequations}
  for all $f\in U_{\delta}$. 
\end{proposition}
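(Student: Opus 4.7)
The plan is to reduce both estimates to their linear counterparts for the operator $T_0:\LtwoD\to\LtwoD$ defined in \eqref{eq:DefT0}, and then control the nonlinear perturbation via the bounds \eqref{eq:BoundTMinusT0Ltwo} established in the preceding remark. Writing
\begin{equation*}
  T(f) \,=\, T_0 f + \bigl( T(f) - T_0 f \bigr) \,,
\end{equation*}
the bound \eqref{eq:BoundTMinusT0Ltwo} gives $\|T(f)-T_0f\|_\LtwoD\leq C\|f\|_\LinftyD^\alpha\|f\|_\LtwoD$, so we only have to understand $T_0$ itself.

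For the upper bound \eqref{eq:BoundT}, I would combine the $\Ltwo$-stability \eqref{eq:V0EstimateLtwo} of $V_0$ with boundedness of $q_0\in\LinftyD$ to estimate $\|T_0f\|_\LtwoD=k^2\|q_0(f+V_0f)\|_\LtwoD\leq k^2\|q_0\|_\LinftyD(1+C_{V_0,2})\|f\|_\LtwoD$. Adding the perturbation bound from \eqref{eq:BoundTMinusT0Ltwo} yields \eqref{eq:BoundT} with an appropriate constant $C_*$.

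The coercivity bound \eqref{eq:CoercivityT} is the main point and rests on the linear coercivity estimate
\begin{equation*}
  |\skp{T_0f}{f}_\LtwoD| \,\geq\, c \|f\|_\LtwoD^2 \qquad \text{for all } f\in\LtwoD \,,
\end{equation*}
which I would establish using the factorization of the linear far field operator $F_0=H^*T_0H$ exactly as in the classical analysis of the factorization method for the linear inverse medium problem (see, e.g., \cite[Thm.~7.33]{Kir21}). The three inputs from Assumption~\ref{ass:Index2} enter here in a standard way: the positivity $q_0\geq \qmin>0$ on $D$ together with the connectedness of $\Rd\setminus\ol D$ give the injectivity of $I-T_0^\sharp$ on the appropriate subspace, the Lipschitz regularity of $\partial D$ allows one to invoke unique continuation and trace identities, and the non-eigenvalue assumption (iii) ensures that the associated middle operator is injective, hence coercive by a Fredholm/contradiction argument on $\LtwoD$. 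With this linear coercivity in hand, I would use Cauchy--Schwarz together with \eqref{eq:BoundTMinusT0Ltwo} to get
\begin{equation*}
  |\skp{T(f)-T_0f}{f}_\LtwoD| \,\leq\, \|T(f)-T_0f\|_\LtwoD\|f\|_\LtwoD
  \,\leq\, C\|f\|_\LinftyD^\alpha\|f\|_\LtwoD^2 \,,
\end{equation*}
and the reverse triangle inequality then yields $|\skp{T(f)}{f}_\LtwoD|\geq(c-C\|f\|_\LinftyD^\alpha)\|f\|_\LtwoD^2$. Factoring out $c_*:=c$ produces the form claimed in \eqref{eq:CoercivityT}.

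The main obstacle is the linear coercivity of $T_0$: all nonlinear information is packaged into a controllable higher-order remainder by \eqref{eq:BoundTMinusT0Ltwo}, so the substantive work is to import the classical argument under Assumption~\ref{ass:Index2}. Everything else is either direct stability of $V_0$, the triangle inequality, or Cauchy--Schwarz, and a smallness choice of $\delta$ (already fixed in Proposition~\ref{pro:Wellposedness}) ensures that the constants on the right-hand sides of \eqref{eq:BoundT}--\eqref{eq:CoercivityT} are uniform over $f\in U_\delta$.
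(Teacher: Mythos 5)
Your proposal follows essentially the same route as the paper: split $T(f)=T_0f+(T(f)-T_0f)$, control the remainder by \eqref{eq:BoundTMinusT0Ltwo}, bound $\|T_0f\|_\LtwoD$ via \eqref{eq:V0EstimateLtwo}, and reduce \eqref{eq:CoercivityT} to coercivity of the linear middle operator $T_0$ (the paper does this by writing $T_0=S_0^{-1}$ with $S_0\psi=\frac{1}{k^2q_0}\psi-\Phi_k\ast\psi$ and citing \cite[Lmm.~7.35, Thm.~7.30]{Kir21}).

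One caveat: you claim the linear coercivity $|\skp{T_0f}{f}_\LtwoD|\geq c\|f\|_\LtwoD^2$ \emph{for all} $f\in\LtwoD$, and assert it follows ``exactly as in the classical analysis.'' The classical factorization argument only yields this estimate on the subspace $\ol{\Rcal(H)}$, i.e.\ on ($\LtwoD$-limits of) Herglotz wave functions, because the contradiction/compactness step uses that $f$ solves the homogeneous Helmholtz equation in $D$ — this is exactly where the interior transmission eigenvalue assumption (iii) enters, via the pair $(f,\,f+V_0f)$. For general $f\in\LtwoD$ the sign information degenerates (e.g.\ non-radiating sources make $\imag\skp{T_0f}{f}=0$ without forcing $f=0$), so coercivity on all of $\LtwoD$ is not available and is not what the cited results give. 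This does not sink your argument, since $T$ is only defined on $\Dcal(T)=\ol{H(\Dcal(F))}\subset\ol{\Rcal(H)}$ and the proposition is only invoked for $f=Hg$; but you should state and use the coercivity of $T_0$ on $\ol{\Rcal(H)}$ (as the paper does), not on all of $\LtwoD$. With that restriction, your triangle-inequality/Cauchy--Schwarz conclusion of \eqref{eq:CoercivityT}, and the upper bound \eqref{eq:BoundT}, match the paper's proof.
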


\begin{proof}
  Let $f\in U_{\delta}$. 
  We first note that \eqref{eq:DefT0} and \eqref{eq:V0EstimateLtwo}
  show that
  \begin{equation*}
    \begin{split}
      \|T_0f\|_\LtwoD 
      \,\leq\, k^2 \|q_0\|_\LinftyD (1+C_{V_0,2}) \|f\|_\LtwoD \,.
    \end{split}
  \end{equation*}
  Combining this with \eqref{eq:BoundTMinusT0Ltwo} gives
  \begin{equation*}
    \begin{split}
      \|T(f)\|_\LtwoD
      &\,\leq\, \|T_0 f\|_\LtwoD + \|T(f)-T_0 f\|_\LtwoD \\
      &\,\leq\, k^2 \|q_0\|_\LinftyD (1+C_{V_0,2}) \|f\|_\LtwoD
      + C \|f\|_\LinftyD^\alpha \|f\|_\LtwoD\\
      &\,\leq\, C_* \bigl( 1 + \|f\|_\LinftyD^\alpha \bigr) 
        \|f\|_\LtwoD
    \end{split}
  \end{equation*}
  for some $C_*>0$.

  Next let $S_0:\LtwoD\to \LtwoD$ be defined by
  \begin{equation*}
    S_0\psi
    \,:=\, \frac{1}{k^2 q_0}\psi - \Phi_k\ast \psi \,.
  \end{equation*}
  It has been shown in \cite[Thm.~7.32]{Kir21} that $S_0$ is an
  isomorphism with  $T_0 =S_0 ^{-1}$, which can be seen
  using \eqref{eq:DefT0} and \eqref{eq:DefV0} as follows.
  Let $h\in\LtwoD$, then
  \begin{equation*}
    \begin{split}
      S_0 T_0 h
      &\,=\, \frac{1}{k^2q_0} T_0 h - \Phi_k\ast (T_0 h)
      \,=\, (I+V_0)h - \Phi_k\ast \bigl(k^2q_0(h+V_0h)\bigr) \\
      &\,=\, h + \bigl(I - k^2 \Phi_k\ast (q_0\ph)\bigr)(V_0h)
      - k^2 \Phi_k\ast (q_0h) 
      \,=\, h \,.   
    \end{split}
  \end{equation*}
  If $k^2$ is not an interior transmission eigenvalue then it follows
  from \cite[Lmm.~7.35]{Kir21} and the arguments used in the proof of
  \cite[Thm.~7.30]{Kir21} that there exists a constant $c_*>0$ such that 
  \begin{equation*}
    | \skp{T_0f}{f}_\LtwoD |
    \,=\, | \skp{S_0^{-1}f}{f}_\LtwoD |
    \,\geq\, c_* \|f\|_\LtwoD^2
    \qquad \text{for all } f\in\ol{\Rcal(H)} \,.
  \end{equation*}
  Accordingly, combining this with \eqref{eq:BoundTMinusT0Ltwo} gives
  \begin{equation*}
    \begin{split}
      \bigl|\skp{T(f)}{f}_\LtwoD\bigr|
      &\,\geq\, \bigl|\skp{T_0f}{f}_\LtwoD\bigr| 
      - \bigl|\skp{T(f)-T_0f}{f}_\LtwoD\bigr| \\
      &\,\geq\, \bigl( c_* - C \|f\|_\LinftyD^\alpha \bigr) 
      \|f\|_\LtwoD^2 
      \,\geq\, c_* \bigl( 1 - C \|f\|_\LinftyD^\alpha \bigr) 
      \|f\|_\LtwoD^2 \,.
    \end{split}
  \end{equation*}
\end{proof}

Combining \eqref{eq:BoundsT} with \eqref{eq:HerglotzOperator} and
applying H\"older's inequality gives the following corollary.

\begin{corollary}
  \label{cor:BoundsT}
  Suppose that Assumption~\ref{ass:Index2} holds.
  Then there are constants $c_*,C_*,C>0$ such that 
  \begin{subequations}
    \label{eq:BoundsTH}
    \begin{align}
      \|T(Hg)\|_\LtwoD 
      &\,\leq\, C_* \bigl( 1+C\omegad^{\alpha/2}\|g\|_\LtwoSd^\alpha \bigr)  
        \|Hg\|_\LtwoD  \,, \label{eq:BoundTH}\\
      |\skp{ T(Hg)}{Hg}_\LtwoD|
      &\,\geq\, c_*\bigl( 1-C\omegad^{\alpha/2} 
        \|g\|_{\LtwoSd)}^\alpha \bigr) \|Hg\|_\LtwoD^2 \label{eq:CoercivityTH}
    \end{align}
  \end{subequations}
  for all $g\in\Dcal(F)$.
\end{corollary}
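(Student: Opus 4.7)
The plan is to obtain Corollary~\ref{cor:BoundsT} as a direct consequence of Proposition~\ref{pro:BoundsT} by taking $f = Hg$ and replacing the $\LinftyD$-norm of $Hg$ on the right-hand side by a bound in terms of $\|g\|_\LtwoSd$, using the elementary pointwise/Cauchy--Schwarz estimate on the Herglotz integral that was recorded in \eqref{eq:BoundHerglotzLinfty}. I do not expect any serious obstacle here; the only thing one really needs to verify before applying Proposition~\ref{pro:BoundsT} is that $Hg\in U_\delta$ whenever $g\in\Dcal(F)$.

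First, I would check admissibility of the substitution. By the definition of $\Dcal(F)$ we have $\|g\|_\LtwoSd\le\delta/\omegad^{1/2}$, so \eqref{eq:BoundHerglotzLinfty} gives
\begin{equation*}
  \|Hg\|_\LinftyD \,\le\, \omegad^{1/2}\|g\|_\LtwoSd \,\le\, \delta \,,
\end{equation*}
i.e.\ $Hg\in U_\delta$. Consequently Proposition~\ref{pro:BoundsT} applies to $f=Hg$, yielding
\begin{equation*}
  \|T(Hg)\|_\LtwoD \,\le\, C_*\bigl(1+\|Hg\|_\LinftyD^\alpha\bigr)\|Hg\|_\LtwoD
\end{equation*}
and
\begin{equation*}
  \bigl|\skp{T(Hg)}{Hg}_\LtwoD\bigr| \,\ge\, c_*\bigl(1-C\|Hg\|_\LinftyD^\alpha\bigr)\|Hg\|_\LtwoD^2 \,.
\end{equation*}

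Second, I would insert the $\LinftyD$-bound on $Hg$. Raising \eqref{eq:BoundHerglotzLinfty} to the power $\alpha>0$ gives
\begin{equation*}
  \|Hg\|_\LinftyD^\alpha \,\le\, \omegad^{\alpha/2}\|g\|_\LtwoSd^\alpha \,,
\end{equation*}
and plugging this into the two displays above produces exactly \eqref{eq:BoundTH} and \eqref{eq:CoercivityTH} (the extra constant $C$ in the statement is absorbed from this step, respectively from the factor already present in \eqref{eq:CoercivityT}). Since the entire argument is a two-line substitution using results already proved, there is no hard step; the only tiny subtlety worth flagging is making sure the constants $C_*$, $c_*$, $C$ in the corollary are taken to be those of Proposition~\ref{pro:BoundsT}, so that the estimates remain uniform in $g\in\Dcal(F)$.
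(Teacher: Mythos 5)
Your proof is correct and is exactly the argument the paper intends: the corollary is obtained by applying Proposition~\ref{pro:BoundsT} with $f=Hg$ (admissible since $\|Hg\|_\LinftyD\le\omegad^{1/2}\|g\|_\LtwoSd\le\delta$ for $g\in\Dcal(F)$ by \eqref{eq:BoundHerglotzLinfty}) and then replacing $\|Hg\|_\LinftyD^\alpha$ by $\omegad^{\alpha/2}\|g\|_\LtwoSd^\alpha$, noting the sign of this substitution works in both the upper and lower bound. No gaps.
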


The following result can be shown analogously to
\cite[Thm.~4.6]{KirGri08}. 
\begin{proposition}
  \label{pro:RangeCharacterizationHstar}
  For any $z\in\Rd$ we define the \emph{test function}
  $\phi_z\in\LtwoSd$ by
  \begin{equation*}
    \phi_z(\xhat) 
    \,:=\, e^{-\rmi k z\cdot\xhat} \,, \qquad \xhat\in\Sd \,.
  \end{equation*}
  Then $z\in D$ if and only if $\phi_z\in\Rcal(H^*)$. 
\end{proposition}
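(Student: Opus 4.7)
The plan is to exploit the standard interpretation that, up to the constant $C_d$ from Proposition~\ref{pro:Farfield}, $H^*\phi$ is the far field pattern of the volume potential $\Phi_k\ast\phi$, while $\phi_z$ is (up to the same constant) the far field pattern of the point source $\Phi_k(\ph-z)$. The argument then reduces to a classical comparison based on Rellich's lemma and the singularity of the fundamental solution, following the template of \cite[Thm.~4.6]{KirGri08}.

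For the forward implication, assume $z\in D$. Since $D$ is open, I would fix $\varepsilon>0$ with $\ol{B_\varepsilon(z)}\subset D$ and a cutoff $\chi\in C^\infty_c(D)$ with $\chi\equiv 1$ on $B_{\varepsilon/2}(z)$, and then set $v:=(1-\chi)\Phi_k(\ph-z)$. The function $v$ is smooth on all of $\Rd$ because the cutoff removes the singularity at $z$, and it agrees with $\Phi_k(\ph-z)$ outside a compact subset of $D$, hence is radiating. A direct computation gives
\begin{equation*}
  \phi \,:=\, -(\Delta v+k^2 v)
  \,=\, (\Delta\chi)\,\Phi_k(\ph-z)+2\nabla\chi\cdot\nabla\Phi_k(\ph-z) \,\in\, C^\infty_c(D)\subset\LtwoD \,.
\end{equation*}
Since $v$ and $\Phi_k\ast\phi$ are both radiating solutions of $\Delta u+k^2u=-\phi$ in $\Rd$, Rellich's lemma forces $v=\Phi_k\ast\phi$; equating the far field patterns of the two sides then yields $\phi_z=H^*\phi$, so $\phi_z\in\Rcal(H^*)$.

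For the converse, I would argue by contradiction, assuming $z\notin D$ yet $\phi_z=H^*\phi$ for some $\phi\in\LtwoD$. Setting $w:=\Phi_k\ast\phi\in\HtwolocRd$, both $w$ and $\Phi_k(\ph-z)$ are radiating and satisfy the Helmholtz equation on $\Rd\setminus(\ol{D}\cup\{z\})$ with identical far field patterns. The connectedness of $\Rd\setminus\ol{D}$ from Assumption~\ref{ass:Index2}, together with $d\geq 2$ so that removing the single point $z$ does not disconnect this set, allows me to combine Rellich's lemma with unique continuation to conclude $w=\Phi_k(\ph-z)$ on that unbounded connected component. The Sobolev embedding $\HtwolocRd\hookrightarrow C(\Rd)$ in dimensions $d\in\{2,3\}$ implies that $w$ is continuous at $z$, whereas $\Phi_k(\ph-z)$ is unbounded there; choosing a sequence $x_n\to z$ within $\Rd\setminus\ol{D}$ then gives the required contradiction.

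The main obstacle I anticipate is the boundary case $z\in\partial D$, where both the accessibility of $z$ from within $\Rd\setminus\ol{D}$ (which follows from the Lipschitz regularity of $\partial D$ in Assumption~\ref{ass:Index2}) and the fact that the volume potential $w$ extends continuously across $\partial D$ (requiring the Sobolev embedding in low dimensions) are essential. Once these two regularity facts are in place, everything else is a routine far field comparison via Rellich's lemma.
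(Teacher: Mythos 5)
Your proposal is correct and is essentially the argument the paper has in mind: the paper gives no proof but refers to \cite[Thm.~4.6]{KirGri08}, and your construction (cutoff near $z$ for the forward direction; Rellich's lemma, unique continuation, and the boundedness of the volume potential $\Phi_k\ast\phi\in\HtwolocRd\hookrightarrow C(\Rd)$ for $d\le 3$ versus the singularity of $\Phi_k(\ph-z)$ for the converse, with the Lipschitz boundary giving exterior accessibility of $z\in\di D$) is precisely that classical proof adapted to the paper's normalization of far fields.
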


Combining the results above, we obtain the main result of this
section. 
\begin{theorem}
  \label{thm:NonlinearFactorization}
  Suppose that Assumption~\ref{ass:Index2} holds, and let $\delta>0$
  be as in Proposition~\ref{pro:Wellposedness}. 
  Let~$C>0$ be the constant in \eqref{eq:CoercivityTH}, and let
  \begin{equation*}
    \rho
    \,:=\, \min \biggl\{
    \frac{\delta}{\omegad^{1/2}} \,, 
    \frac{1}{\omegad^{1/2}} \Bigl(\frac{1}{2C}\Bigr)^{1/\alpha}
    \biggr\}
  \end{equation*}
  Then, for any $0<\rhotilde\leq \rho$ and $z\in\Rd$,
  \begin{equation}
    \label{eq:NLFCharacterization}
    z\in D 
    \;\Llra\; 
    \inf \biggl\{   
    \Bigl|\frac{\skp{F(g)}{g}_\LtwoSd}{\skp{g}{\phi_z}_\LtwoSd^2}\Bigr| 
    \;\bigg|\; g\in \LtwoSd \,,\,
    \|g\|_\LtwoSd=\rhotilde \,,\, 
    \skp{g}{\phi_z}_\LtwoSd\neq 0 \biggr\} 
    > 0 \,.
  \end{equation}
\end{theorem}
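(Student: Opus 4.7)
The plan is to deduce the characterization directly from the abstract inf-criterion in Theorem~\ref{thm:AbstractInfCriterion} applied with $X=\LtwoSd$, $Y=\LtwoD$, the Herglotz operator $\Hcal=H$, and the nonlinear operator $\Tcal=T$ from Proposition~\ref{pro:NonlinearFactorization}. The factorization $\Fcal=F=H^*T(H)$ and the identity $\Dcal(T)=\overline{H(\Dcal(F))}$ are already in place by Proposition~\ref{pro:NonlinearFactorization}, and $H$ is compact. The only nontrivial input needed is that the boundedness and coercivity hypotheses on $T\circ H$ hold \emph{uniformly} on a ball of some radius $\rho>0$ in $\LtwoSd$.

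To verify this I would invoke Corollary~\ref{cor:BoundsT}. For any $g\in\LtwoSd$ with $\|g\|_\LtwoSd\leq\rho$, the upper bound \eqref{eq:BoundTH} gives
\begin{equation*}
\|T(Hg)\|_\LtwoD \,\leq\, C_*\bigl(1+C\omegad^{\alpha/2}\rho^{\alpha}\bigr)\|Hg\|_\LtwoD \,,
\end{equation*}
which is a uniform bound of the required form. For the coercivity \eqref{eq:CoercivityTH} to yield a genuinely positive constant, I would use the second component in the definition of $\rho$: since $\rho\leq \omegad^{-1/2}(1/(2C))^{1/\alpha}$, we have $C\omegad^{\alpha/2}\|g\|_\LtwoSd^\alpha\leq 1/2$, hence
\begin{equation*}
|\skp{T(Hg)}{Hg}_\LtwoD| \,\geq\, \tfrac{c_*}{2}\|Hg\|_\LtwoD^2 \,.
\end{equation*}
The first component in the definition of $\rho$, namely $\rho\leq\delta/\omegad^{1/2}$, ensures by \eqref{eq:BoundHerglotzLinfty} that $Hg\in U_\delta$, so that $g\in\Dcal(F)$ and all preceding results apply. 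Thus the hypotheses of Theorem~\ref{thm:AbstractInfCriterion} are satisfied with constants $C_*(1+C\omegad^{\alpha/2}\rho^\alpha)$ and $c_*/2$ on the ball of radius $\rho$.

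Applying Theorem~\ref{thm:AbstractInfCriterion} with any $0<\rhotilde\leq\rho$ and $\phi=\phi_z$ then yields the equivalence
\begin{equation*}
\phi_z\in\Rcal(H^*) \quad\Llra\quad \inf\biggl\{\Bigl|\frac{\skp{F(g)}{g}_\LtwoSd}{\skp{g}{\phi_z}_\LtwoSd^2}\Bigr|\,\bigg|\,g\in\LtwoSd,\;\|g\|_\LtwoSd=\rhotilde,\;\skp{g}{\phi_z}_\LtwoSd\neq 0\biggr\}>0 \,.
\end{equation*}
Finally, Proposition~\ref{pro:RangeCharacterizationHstar} identifies the range condition $\phi_z\in\Rcal(H^*)$ with the geometric condition $z\in D$, and chaining these two equivalences gives \eqref{eq:NLFCharacterization}. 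The only delicate point in the whole argument is the bookkeeping for $\rho$: it must simultaneously keep $Hg$ inside the small-data regime $U_\delta$ where the nonlinear forward problem is well-posed, and make the error term $C\omegad^{\alpha/2}\|g\|^\alpha$ in the coercivity estimate smaller than $1$, which is exactly what the two-term minimum in the definition of $\rho$ accomplishes.
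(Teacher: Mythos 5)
Your proposal is correct and follows essentially the same route as the paper: the paper's proof likewise combines Proposition~\ref{pro:RangeCharacterizationHstar}, the abstract inf-criterion of Theorem~\ref{thm:AbstractInfCriterion} applied to the factorization $F=H^*T(H)$ from Proposition~\ref{pro:NonlinearFactorization}, and the bounds of Corollary~\ref{cor:BoundsT}, with the two-term minimum in $\rho$ ensuring both that $Hg\in U_\delta$ (well-posedness of the forward map) and that the coercivity constant stays positive. Your write-up simply makes explicit the constant bookkeeping that the paper's shorter proof leaves implicit.
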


\begin{proof}
  By Proposition~\ref{pro:RangeCharacterizationHstar} we know that
  $z\in D$ is equivalent to $\phi_z\in\Rcal(H^*)$, which, by
  Theorem~\ref{thm:AbstractInfCriterion}, is in turn equivalent to
  the condition on the right hand side of~\eqref{eq:NLFCharacterization} 
  provided that the nonlinear far field operator $F$ admits the
  factorization $F=H^*T(H)$ for $T$ as in
  Theorem~\ref{thm:AbstractInfCriterion}.
  This has been shown in Proposition~\ref{thm:NonlinearFactorization}
  and Corollary~\ref{cor:BoundsT}.
  Note that our choice of $\rho$ guarantees the existence of the far
  field operator (see Proposition~\ref{pro:Wellposedness}) as well 
  as the coercivity estimate in
  Proposition~\ref{thm:NonlinearFactorization} (see
  Corollary~\ref{cor:BoundsT}). 
\end{proof}

We will comment on a numerical implementation of this criterion in
Section~\ref{sec:NumericalExamples} below.
For numerical implementations in the linear case we refer, e.g., to
\cite{Kir99,KirGri08}.

\section{The nonlinear monotonicity method}
\label{sec:MonotonicityMethod}
In this section we consider general nonlinear contrast functions 
$q\in L^\infty(\Rd\times\R)$ as in
Section~\ref{sec:FactorizationMethod}, but we waive the assumption on
$k^2$ not being a transmission eigenvalue.

\begin{assumption}
  \label{ass:Index3}
  Let $D$ be open and Lipschitz bounded such that
  $\Rd\setminus\ol{D}$ is connected. 
  Then the nonlinear contrast function $q\in L^\infty(\Rd\times\R)$
  shall satisfy Assumption~\ref{ass:Index1}, and
  \begin{itemize}
  \item[(i)] $\supp(q) \tm \ol{D}\times\R$,
  \item[(ii)] $\supp(q_0) = \ol{D}$ with
    $0 < \qmin \leq q_0 \leq \qmax < \infty$ a.e.\ in $\ol{D}$ for
    some $\qmin,\qmax>0$. 
  \end{itemize}
\end{assumption}

Given any open and bounded subset $B\tm\Rd$, we define the associated 
\emph{probing operator} $P_B:\LtwoSd \to \LtwoSd$ by
\begin{equation*}
  P_Bg \,:=\, k^2 H_B^*H_Bg \,,
\end{equation*}
where $H_B:L^2(\Sd)\to L^2(B)$ and $H_B^*:L^2(B)\to L^2(\Sd)$ are
given as in \eqref{eq:HerglotzOperator} and \eqref{eq:HerglotzAdjoint}
with  $D$ replaced by $B$.
Accordingly, we find that for all $g\in\LtwoSd$,
\begin{equation}
  \label{eq:IdentityTB}
  \skp{P_Bg}{g}_\LtwoSd 
  \,=\, k^2\int_B |H g|^2 \dx 
  \,=\, k^2 \int_B \biggl| 
  \int_\Sd e^{\rmi k \theta\cdot x} g(\theta) \ds(\theta) \biggr|^2 \dx \,.
\end{equation}
The operator~$P_B$ is bounded, compact, and self-adjoint. 

\begin{theorem}
  \label{thm:NonlinearMonotonicity}
  Suppose that Assumption~\ref{ass:Index3} holds, and let $\delta>0$
  be as in Proposition~\ref{pro:Wellposedness}. 
  Let~$B\tm\Rd$ be open and bounded, and let
  \begin{equation*}
    \rho
    \,:=\, \min \biggl\{
    \frac{\delta}{\omegad^{1/2}} \,,
    \frac{1}{\omegad^{1/2}} \Bigl( \
    \frac{k^2 \qmin}{2\,C} \Bigr)^{\frac1\alpha} 
    \biggr\} \,,
  \end{equation*}
  where $C>0$ is the constant from \eqref{eq:BoundTMinusT0Ltwo}
  and $\delta>0$ is as in Proposition~\ref{pro:Wellposedness}.
  For any $0<\rhotilde\leq\rho$ the following characterization
  of $D$ holds. 

  \begin{itemize}
  \item[(a)] If $B\tm D$, then there exists a finite dimensional
    subspace $\Vcal\tm\LtwoSd$ such that, for
    all~$\beta\leq\frac{\qmin}{2}$, 
    \begin{equation*}
      \beta \skp{P_Bg}{g}_\LtwoSd 
      \,\leq\, \real\bigl(\skp{F(g)}{g}_\LtwoSd\bigr) 
      \quad\text{for all } g\in\Vcalperp \text{ with } 
      \|g\|_\LtwoSd = \rhotilde \,.
    \end{equation*}
  \item[(b)] If $B\not\tm D$, then there is no finite dimensional
    subspace $\Vcal\tm\LtwoSd$ and no $\beta>0$ such that
    \begin{equation*}
      \beta \skp{P_Bg}{g}_\LtwoSd 
      \,\leq\, \real\bigl(\skp{F(g)}{g}_\LtwoSd\bigr) 
      \quad\text{for all } g\in\Vcalperp \text{ with } 
      \|g\|_\LtwoSd = \rhotilde \,.
    \end{equation*}
  \end{itemize}
\end{theorem}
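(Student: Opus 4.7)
The plan is to reduce the nonlinear monotonicity inequality to its linear analogue (known from \cite{AlbGri20,GriHar18}) by exploiting the factorization $F = H^*T(H)$ of Proposition~\ref{pro:NonlinearFactorization} and controlling the defect $T - T_0$ through \eqref{eq:BoundTMinusT0Ltwo}. Denoting by $F_0 := H^* T_0 H$ the linear far field operator associated with $q_0$, the factorization immediately yields $\skp{F(g) - F_0 g}{g}_\LtwoSd = \skp{T(Hg) - T_0 Hg}{Hg}_\LtwoD$, and I would combine Cauchy--Schwarz with \eqref{eq:BoundTMinusT0Ltwo} and \eqref{eq:BoundHerglotzLinfty} to obtain, for every $g$ with $\|g\|_\LtwoSd \leq \tilde\rho \leq \rho$,
\begin{equation*}
  \bigl|\skp{F(g) - F_0 g}{g}_\LtwoSd\bigr|
  \,\leq\, C \omegad^{\alpha/2} \tilde\rho^\alpha \|Hg\|_\LtwoD^2
  \,\leq\, \tfrac{k^2 \qmin}{2}\|Hg\|_\LtwoD^2 \,,
\end{equation*}
the last inequality being the sole purpose of the formula defining $\rho$.

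For part~(a) I would invoke the linear monotonicity principle \cite{AlbGri20,GriHar18}: under Assumption~\ref{ass:Index3} there exists a finite dimensional $\Vcal \subseteq \LtwoSd$ such that $\qmin k^2 \|Hg\|_\LtwoD^2 \leq \real \skp{F_0 g}{g}_\LtwoSd$ for all $g \in \Vcalperp$. Using this together with the error bound and the inclusion $B \subseteq D$ (whence $\|Hg\|_{L^2(B)} \leq \|Hg\|_\LtwoD$), on $\Vcalperp$ with $\|g\|_\LtwoSd = \tilde\rho$ one obtains
\begin{equation*}
  \real \skp{F(g)}{g}_\LtwoSd
  \,\geq\, \tfrac{k^2 \qmin}{2} \|Hg\|_\LtwoD^2
  \,\geq\, \tfrac{\qmin}{2}\, k^2 \|Hg\|_{L^2(B)}^2
  \,=\, \tfrac{\qmin}{2} \skp{P_B g}{g}_\LtwoSd \,,
\end{equation*}
which would prove~(a) for every $\beta \leq \qmin/2$.

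For part~(b) I would argue by contradiction. If some finite dimensional $\Vcal$ and $\beta > 0$ satisfied $\beta \skp{P_B g}{g}_\LtwoSd \leq \real \skp{F(g)}{g}_\LtwoSd$ on $\Vcalperp \cap \{\|g\|_\LtwoSd = \tilde\rho\}$, then rescaling $g \mapsto \lambda g$ and using that $\skp{P_B g}{g}$, $\skp{F_0 g}{g}$ are quadratic while the nonlinear defect $|\skp{(F - F_0)(\lambda g)}{\lambda g}|$ grows no faster than $\lambda^2$ after the cap $\|\lambda g\|_\LtwoSd = \tilde\rho$ is applied to the $L^\infty$ factor, this would propagate to all $g \in \Vcalperp$ in the weakened form
\begin{equation*}
  \beta \skp{P_B g}{g}_\LtwoSd
  \,\leq\, \real \skp{F_0 g}{g}_\LtwoSd + C \omegad^{\alpha/2} \tilde\rho^\alpha \|Hg\|_\LtwoD^2 \,.
\end{equation*}
I would then enlarge $\Vcal$ to a finite dimensional subspace $\Vcaltilde$ carrying the linear coercivity $\qmin k^2 \|Hg\|_\LtwoD^2 \leq \real \skp{F_0 g}{g}_\LtwoSd$ from part~(a), absorbing the defect on $\Vcaltilde^\perp$ into the linear far field term and producing a genuine linear monotonicity inequality $\beta' \skp{P_B g}{g}_\LtwoSd \leq \real \skp{F_0 g}{g}_\LtwoSd$ with $\beta' > 0$ on a finite codimensional subspace. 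This would contradict the converse direction of the linear monotonicity principle, which is available precisely because $B \not\subseteq D$.

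The main obstacle will be the faithful invocation of the linear monotonicity machinery in its quantitative form — both the coercivity estimate with constant $\qmin k^2$ used in~(a) and the sharp linear characterization of $D$ underpinning the converse needed in~(b) — together with the absorption argument that forces the nonlinear defect into the linear term. The specific choice of $\rho$ with exponent $1/\alpha$ and weight $\omegad^{-1/2}$, combined with the slope cap $\beta \leq \qmin/2$, encodes exactly the tradeoff that makes both (a) directly and the absorption step in (b) succeed.
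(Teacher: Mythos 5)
Your proof is correct; part (a) is essentially the paper's own argument (split off the linear far field operator, bound the defect via \eqref{eq:BoundTMinusT0Ltwo} and \eqref{eq:BoundHerglotzLinfty}, apply the linear monotonicity lower bound $\real\skp{F_0g}{g}_\LtwoSd \geq k^2\int_D q_0|Hg|^2\dx$ from \cite[Thm.~3.2]{GriHar18} on a finite-codimensional subspace, and use the definition of $\rho$ to absorb the defect into half of the $\qmin$-term). In part (b), however, you take a genuinely different route. The paper keeps the nonlinear defect explicit and re-runs the linear ``converse'' machinery inside the nonlinear setting: it bounds $\real\skp{F_0g}{g}_\LtwoSd$ from above by $k^2\int_D q_0|V_0Hg|^2\dx$ via \cite[Cor.~3.4]{GriHar18}, controls $\|V_0Hg\|_\LtwoD$ by $\|Hg\|_\LtwoD$ via \cite[Thm.~4.5]{GriHar18}, and then constructs through \cite[Lmm.~4.4]{GriHar18} and the localized-potentials argument of \cite[Thm.~4.1]{GriHar18} a sequence $g_m\in\Vcalperp$, normalized to $\|g_m\|_\LtwoSd=\rhotilde$, with $\|Hg_m\|_{L^2(B)}/\|Hg_m\|_\LtwoD\to\infty$, contradicting the resulting quadratic bound. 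You instead homogenize the contradiction hypothesis (legitimate: once the defect is estimated using $\|g\|_\LtwoSd=\rhotilde$, every remaining term is quadratic in $g$, so the inequality rescales from the sphere to all of $\Vcalperp$), enlarge $\Vcal$ by the finite-dimensional exceptional subspace of the linear coercivity estimate, absorb the defect into $\tfrac12\real\skp{F_0g}{g}_\LtwoSd$ thanks to the choice of $\rho$, and arrive at a purely linear inequality $\beta'\skp{P_Bg}{g}_\LtwoSd\leq\real\skp{F_0g}{g}_\LtwoSd$ with $\beta'=2\beta/3>0$ on a finite-codimensional subspace; the negative direction of the linear shape characterization in \cite{GriHar18} (whose hypotheses on $q_0$ are exactly those in Assumption~\ref{ass:Index3}) then yields the contradiction with $B\not\tm D$. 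Your version is shorter and treats the linear theorem as a black box, avoiding \cite[Cor.~3.4]{GriHar18} and \cite[Thm.~4.5]{GriHar18} entirely, at the price of quoting the full linear characterization; the paper's version re-derives that converse explicitly, which makes the localized-potentials mechanism visible and keeps the proof self-contained modulo the auxiliary lemmas. Both arguments hinge on the same smallness of $\rhotilde^\alpha$ encoded in the definition of $\rho$.
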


\begin{proof}
  We consider the factorization of the far field operator
  $F=H^*T(H)$ as in \eqref{eq:Factorization}. 
  Accordingly,  the linear far field operator corresponding to the
  contrast function $q_0$ satisfies $F_0 = H^* T_0 H$, and we
  obtain from \eqref{eq:BoundTMinusT0Ltwo} that, for all
  $g\in\Dcal(F)$,  
  \begin{equation*}
    \begin{split}
      \real\biggl(\int_\Sd g\, \ol{F(g)}\ds\biggr)
      &\,=\, \real\biggl(\int_\Sd g\, \ol{F_0g}\ds\biggr)
      + \real\biggl(\int_\Sd g\, \ol{(F-F_0)(g)}\ds\biggr) \\
      &\,\geq\, \real\biggl(\int_\Sd g\, \ol{F_0g}\ds\biggr)
      - C \|Hg\|_\LinftyD^\alpha \|Hg\|_\LtwoD^2 \,.
    \end{split}
  \end{equation*}
  Applying \cite[Thm.~3.2]{GriHar18} with $q_1=0$ and $q_2=q$ we
  find that there exists a finite dimensional subspace
  $\Vcal\tm\LtwoSd$ such that, for all $g\in\Dcal(F)\cap\Vcalperp$,
  \begin{equation*}
    \begin{split}
      \real\biggl(\int_\Sd g\, \ol{F(g)}\ds\biggr)
      &\,\geq\, k^2 \int_D  q_0 |Hg|^2 \dx 
      - C \|Hg\|_\LinftyD^\alpha \int_D |Hg|^2 \dx \\
      &\,\geq\, k^2 \Bigl( \qmin - \frac{C\omegad^{\alpha/2}}{k^2} 
      \|g\|_\LtwoSd^\alpha \Bigr)
      \int_D  |Hg|^2 \dx \,.
    \end{split}
  \end{equation*}
  Assuming that $\|g\|_\LtwoSd = \rhotilde$ we obtain that
  \begin{equation*}
    \real\biggl(\int_\Sd g\, \ol{F(g)}\ds\biggr)
    \,\geq\, k^2 \frac{\qmin}{2} \int_D  |Hg|^2 \dx \,.
  \end{equation*}
  Moreover, if $B\tm D$ and $\beta\leq \frac{\qmin}{2}$, then 
  \begin{equation*}
    \beta \int_\Sd g \ol{P_Bg} \ds
    \,=\, k^2 \beta \int_B |Hg|^2 \dx
    \,\leq\, k^2 \frac{\qmin}{2} \int_D  |Hg|^2 \dx \,,
  \end{equation*}
  which shows part (a).

  We prove part (b) by contradiction.  
  Let $B\not\tm D$, $\beta>0$, and assume that
  \begin{equation}
    \label{eq:ProofShape1-3}
    \beta \skp{P_Bg}{g}_\LtwoSd 
    \,\leq\, \real\bigl(\skp{F(g)}{g}_\LtwoSd\bigr) 
    \qquad\text{for all } g\in\Vcal_1^\perp 
    \text{ with } \|g\|_\LtwoSd=\rhotilde 
  \end{equation}
  for some $0<\rhotilde\leq\rho$ and a finite dimensional
  subspace $\Vcal_1\tm\LtwoSd$. 
  Using \eqref{eq:BoundTMinusT0Ltwo} we find that 
  \begin{equation*}
    \begin{split}
      \real\biggl(\int_\Sd g\, \ol{F(g)}\ds\biggr)
      &\,=\, \real\biggl(\int_\Sd g\, \ol{F_0g}\ds\biggr)
      + \real\biggl(\int_\Sd g\, \ol{(F-F_0)(g)}\ds\biggr) \\
      &\,\leq\, \real\biggl(\int_\Sd g\, \ol{F_0g}\ds\biggr)
      + C \|Hg\|_\LinftyD^\alpha \|Hg\|_\LtwoD^2 \,.
    \end{split}
  \end{equation*}
  Applying the monotonicity relation (3.3) in
  \cite[Cor.~3.4]{GriHar18} with $q_1=0$ and $q_2=q$, shows that
  there exists a finite dimensional subspace $\Vcal_2\tm\LtwoSd$ such
  that 
  \begin{equation}
    \label{eq:ProofShape1-5}
    \real\biggl(\int_\Sd g \, \ol{F_0g} \ds\biggr)
    \,\leq\, 
    k^2 \int_D  q_0 |V_0Hg|^2\dx 
    \qquad\text{for all } g\in\Vcal_2^\perp \,.  
  \end{equation}
  Combining \eqref{eq:ProofShape1-3}--\eqref{eq:ProofShape1-5}, we
  obtain for $\Vcaltilde := \Vcal_1^\perp + \Vcal_2^\perp$ that, for
  all $g\in\Vtilde^\perp$ with $\|g\|_\LtwoSd = \rhotilde$, 
  \begin{equation*}
    \begin{split}
      k^2 \beta \|Hg\|_{L^2(B)}^2
      &\,\leq\, k^2 \int_D   q_0 |V_0Hg|^2\dx
      + C \|H g\|_\LinftyD^\alpha \|H g\|_\LtwoD^2\\ 
      &\,\leq\, k^2 \qmax \int_D  |V_0Hg|^2\dx 
      + C \|H g\|_\LinftyD^\alpha \|H g\|_\LtwoD^2 \,.
    \end{split}
  \end{equation*}
  Applying \cite[Thm.~4.5]{GriHar18} with $q_1=0$ and $q_2=q$, this
  implies that there exists a constant~$\Ctilde>0$ such that, for all
  $g\in\Vcaltilde^\perp$ with $\|g\|_\LtwoSd=\rhotilde$, 
  \begin{equation}
    \label{eq:ProofShape1-Contradiction}
    \begin{split}
      k^2 \beta \|Hg\|_{L^2(B)}^2
      &\,\leq\, \bigl( \Ctilde k^2 \qmax 
      + C \|Hg\|_\LinftyD^\alpha \bigr)
      \|Hg\|_\LtwoD^2 \\
      &\,\leq\, \bigl( \Ctilde k^2 \qmax 
      + C \omegad^{\alpha/2} \|g\|_\LtwoD^\alpha \bigr)
      \|Hg\|_\LtwoD^2 \,.
    \end{split}
  \end{equation}
  In the following we denote by $\Pcal_\Vcal:\LtwoSd\to\LtwoSd$ the
  orthogonal projection onto $\Vcal$. 
  Using \cite[Lmm.~4.4]{GriHar18} we obtain as in the proof of
  \cite[Thm.~4.1]{GriHar18} a sequence
  $(\widetilde{g}_m)_{m\in\N}\tm\LtwoSd$ such that 
  $\|\widetilde{g}_m\|_\LtwoSd = \rho/2$, and 
  \begin{equation*}
    \|H\widetilde{g}_m\|_{L^2(B)}
    \,\geq\, m \bigl( \|H\widetilde{g}_m\|_\LtwoD 
    + \|\Pcal_\Vcal\widetilde{g}_m\|_\LtwoSd \bigr) \,, \qquad m\in\N \,.
  \end{equation*}
  Therefore, $g_m:=\widetilde{g}_m-\Pcal_\Vcal\widetilde{g}_m\in\Vcalperp$
  and by rescaling $\widetilde{g}_m$ we can assume without loss of
  generality that $\|g_m\|_\LtwoSd = \rhotilde \leq \rho$.
  Accordingly, if $\|H \|\leq 1$, then
  \begin{equation*}
    \begin{split}
      \|Hg_m\|_{L^2(B)} 
      &\,\geq\, \|H\widetilde{g}_m\|_{L^2(B)} 
      - \|H_B\|\|\Pcal_\Vcal\widetilde{g}_m\|_\LtwoSd\\
      &\,>\, m \|H\widetilde{g}_m\|_\LtwoD 
      + m \|\Pcal_\Vcal\widetilde{g}_m\|_\LtwoSd 
      - \|H_B\|\|\Pcal_\Vcal\widetilde{g}_m\|_\LtwoSd\\
      &\,\geq\, m \|Hg_m\|_\LtwoD 
      + \bigl( m (1 - \|H \|) - \|H_B\| \bigr)
      \|\Pcal_\Vcal\widetilde{g}_m\|_\LtwoSd \\
      &\,\geq\, m \|Hg_m\|_\LtwoD      
    \end{split}
  \end{equation*}
  for all $m\in\N$ such that $m \geq \|H_B\| / (1-\|H \|)$.
  On the other hand, if $\|H \|>1$, then 
  \begin{equation*}
    \begin{split}
      \|Hg_m\|_{L^2(B)} 
      &\,\geq\, \|H\widetilde{g}_m\|_{L^2(B)} 
      - \|H_B\|\|\Pcal_\Vcal\widetilde{g}_m\|_\LtwoSd\\
      &\,>\, m \|H\widetilde{g}_m\|_\LtwoD 
      + m \|\Pcal_\Vcal\widetilde{g}_m\|_\LtwoSd 
      - \|H_B\|\|\Pcal_\Vcal\widetilde{g}_m\|_\LtwoSd\\
      &\,\geq\, \frac{m}{2\|H \|} \|H\widetilde{g}_m\|_\LtwoD 
      + m \|\Pcal_\Vcal\widetilde{g}_m\|_\LtwoSd 
      - \|H_B\|\|\Pcal_\Vcal\widetilde{g}_m\|_\LtwoSd\\
      &\,\geq\, \frac{m}{2\|H \|} \|Hg_m\|_\LtwoD 
      + \Bigl( \frac{m}{2} - \|H_B\| \Bigr)
      \|\Pcal_\Vcal\widetilde{g}_m\|_\LtwoSd \\
      &\,\geq\, \frac{m}{2\|H \|} \|Hg_m\|_\LtwoD \,.      
    \end{split}
  \end{equation*}
  for all $m\in\N$ with $m\geq 2\|H_B\|$.
  This contradicts \eqref{eq:ProofShape1-Contradiction}, and we have
  shown part (b).
\end{proof}

\begin{remark}[Numerical implementation of
  Theorem~\ref{thm:NonlinearMonotonicity}] 
  \label{rem:FMvsMM}
  Considering for any $z\in\Rd$ a probing domain $B=B_\eps(z)$ that
  is a ball of radius $\eps>0$ around $z$, the identity
  \eqref{eq:IdentityTB} gives 
  \begin{equation*}
    \begin{split}
      \skp{P_Bg}{g}_\LtwoSd
      &\,=\, k^2 \int_{B_\eps(z)} \biggl| 
      \int_\Sd e^{\rmi k \theta\cdot z} e^{\rmi k \theta\cdot (x-z)}
      g(\theta) \ds(\theta) \biggr|^2 \dx\\
      &\,=\, k^2 |B_\eps(z)|\, \Bigl| 
      \int_\Sd e^{\rmi k \theta\cdot z} g(\theta) \ds(\theta) \Bigr|^2
      + O\bigl(k^3\eps|B_\eps(z)|\|g\|_\LtwoSd^2\bigr) \\
      &\,=\, k^2 |B_\eps(z)|\, |\skp{g}{\phi_z}_\LtwoSd|^2 
      + O\bigl(k^3\eps|B_\eps(z)|\|g\|_\LtwoSd^2\bigr) \,,
    \end{split}
  \end{equation*}
  uniformly with respect to $z\in\Rd$. 
  Here we used that $|e^{\rmi t}-1|\leq |t|$ for $t\in\R$. 
  
  If $z\in D$, then part (a) of
  Theorem~\ref{thm:NonlinearMonotonicity} implies that there is a
  finite dimensional subspace $\Vcal\tm\LtwoSd$ such that for all
  $\beta\leq\frac{\qmin}{2}$ and for all
  $g\in\Vcalperp$ with~$\|g\|_\LtwoSd = \rhotilde$,
  \begin{equation}
    \label{eq:FMvsMM1}
    \frac{\real\bigl(\skp{F(g)}{g}_\LtwoSd\bigr)}
    {\beta \skp{P_Bg}{g}_\LtwoSd}
    \,=\, \frac{\real\bigl(\skp{F(g)}{g}_\LtwoSd\bigr)}
    {\beta k^2 |B_\eps(z)| |\skp{\phi_z}{g}_\LtwoSd|^2
      + O\bigl(k^3\eps|B_\eps(z)|\|g\|_\LtwoSd^2\bigr)}
    \,\geq\, 1 \,,
  \end{equation}
  i.e.,
  \begin{equation}
    \label{eq:FMvsMM2}
    \frac{\real\bigl(\skp{F(g)}{g}_\LtwoSd\bigr)}
    {|\skp{\phi_z}{g}_\LtwoSd|^2 + O\bigl(k\eps\|g\|_\LtwoSd^2\bigr)}
    \,\geq\, k^2\beta |B_\eps| \,,
  \end{equation}
  as $\eps\to 0$.
  This shows that for any fixed $g\in\Vcalperp$ with
  $\|g\|_\LtwoSd = \rhotilde$ and $\skp{g}{\phi_z}_\LtwoSd\neq 0$ we
  can choose $\eps>0$ sufficiently small such that
  \begin{equation*}
    \frac{\real\bigl(\skp{F(g)}{g}_\LtwoSd\bigr)}
    {|\skp{\phi_z}{g}_\LtwoSd|^2}
    \,\geq\, \frac{k^2\beta |B_\eps|}{2} \,.
  \end{equation*}
  
  Similarly, if $z\notin D$, then
  part (b) of Theorem~\ref{thm:NonlinearMonotonicity} says that there
  is no finite dimensional subspace $\Wcal\tm\LtwoSd$ and no $\beta>0$
  such that \eqref{eq:FMvsMM1}--\eqref{eq:FMvsMM2} hold for all
  $g\in\Wcalperp$ with $\|g\|_\LtwoSd = \rhotilde$ as $\eps\to0$.
  
  Assuming that $\phi_z\notin\Vcal$, this says that  
  \begin{equation}
    \label{eq:MonotonicityInfCriterion}
    z\in D 
    \;\Llra\; 
    \inf \biggl\{   
    \frac{\real\bigl(\skp{F(g)}{g}_\LtwoSd\bigr)}
    {|\skp{\phi_z}{g}_\LtwoSd|^2}
    \;\bigg|\; g\in \Vcalperp \,,\,
    \|g\|_\LtwoSd=\rhotilde \,,\, 
    \skp{g}{\phi_z}_\LtwoSd\neq 0 \biggr\} 
    \,>\, 0 \,.
  \end{equation}  
  This is closely related to the inf-criterion from the nonlinear
  factorization method in \eqref{eq:NLFCharacterization}. 
  For the monotonicity criterion we have to exclude the finite
  dimensional subspace $\Vcalperp$, and we assumed that
  $\phi_z\notin\Vcal$ in the derivation of
  \eqref{eq:MonotonicityInfCriterion}, while for the factorization
  method we had to assume that $k^2$ is such that the homogeneous
  linear transmission eigenvalue problem has no nontrivial
  solution.~\hfill$\lozenge$ 
\end{remark}

In Section~\ref{sec:NumericalExamples}, we will use
\eqref{eq:MonotonicityInfCriterion} to implement the nonlinear
monotonicity based reconstruction method.
However, since the finite dimensional subspace $\Vcalperp$ that
has to be excluded is a priori unknown, we will neglect this
constraint. 
For a numerical implementation in the linear case we refer
to~\cite{GriHar18}.

\section{Numerical examples}
\label{sec:NumericalExamples}
In this section we comment on a numerical implementation of the shape
characterizations in Theorems~\ref{thm:NonlinearFactorization} and
\ref{thm:NonlinearMonotonicity}.
We consider the two-dimensional case only, i.e., $d=2$. 

Let $D\tm\Rtwo$ be open and Lipschitz bounded such that
$D\tm\BR$ for some $R>0$ sufficiently large and $\Rtwo\setminus\ol{D}$
is connected. 
We consider at third-order Kerr-type nonlinear material law that is
given by
\begin{equation}
  \label{eq:ExaContrast}
  q(x,|z|) 
  \,:=\, q_0(x) + q_1(x)|z|^2 \,,
  \qquad x\in\Rtwo \,,\; z\in\C \,,
\end{equation} 
where $q_0, q_1 \in \Linfty(\Rtwo)$ with support in $\ol{D}$ and 
$\essinf q_0>-1$.
Accordingly, the scattering problem~\eqref{eq:ScatteringProblem}
consists in determining $u=\ui+\us$ such that 
\begin{equation*}
  \Delta u + k^2 \bigl(1 + q_0 + q_1|u|^2\bigr) u
  \,=\, 0 \qquad \text{in } \Rtwo \,,
\end{equation*}
and $\us$ satisfies the Sommerfeld radiation condition.
This fits into the framework of the previous sections.

We evaluate approximate solutions of this nonlinear scattering problem 
using a fixed point iteration for the nonlinear Lippmann-Schwinger
equation 
\begin{equation*}
  \us(x) 
  \,=\, k^2 \int_D \Phi_k(x-y)
  q(x,|\ui(y)+\us(y)|) (\ui(y)+\us(y)) \dy \,, 
  \qquad x\in [-R,R]^2 \,,
\end{equation*}
as in the proof of Proposition~\ref{pro:Wellposedness}.
Denoting the solution to the linear problem by
\begin{equation}
  \label{eq:ExaFPILinearProblem}
  \us_0
    \,:=\, \bigl(I-k^2\Phi_k \ast (q_0\ph)\bigr)^{-1}
    \bigl( k^2\Phi_k\ast (q_0\ui) \bigr)
    \qquad \text{on } [-R,R]^2 \,,
\end{equation}
the fixed point iteration determines the difference $w:=\us-\us_0$. 
Starting with the initial guess~$w_0=0$ on $[-R,R]^2$ we evaluate, for 
$\ell=0,1,2,\ldots$,
  \begin{equation}
    \label{eq:ExaFPIUpdate}
    w_{\ell+1}
    \,:=\, \bigl(I-k^2\Phi_k \ast (q_0\ph)\bigr)^{-1}
    \Bigl(k^2\Phi_k\ast \bigl( q_1|w_\ell+\us_0+\ui|^2(w_\ell+\us_0+\ui)
    \bigr) \Bigr)
    \qquad \text{on } [-R,R]^2  \,.
  \end{equation}
We have seen in the proof of Proposition~\ref{pro:Wellposedness}
that this fixed point iteration converges whenever the product
$\|q_1\|_\LinftyD \|\ui\|_\LinftyD$ is sufficiently small (see
Remark~\ref{rem:Wellposedness}).  
In our numerical example below we stop the fixed point iteration when
\begin{equation}
  \label{eq:FPITolerance}
  \frac{\|w_{\ell+1}-w_{\ell}\|_{\Linfty([-R,R]^2)}}
  {\|w_{\ell+1}\|_{\Linfty([-R,R]^2)}}
  \,<\, \eps 
\end{equation}
for some tolerance $\eps>0$, and we denote the final iterate by
$w_\eps \approx w$. 
Accordingly, an approximation for the far field pattern~$\uinfty$ can
be evaluated using Proposition~\ref{pro:Farfield} by
\begin{equation*}
  \uinfty_\eps(\xhat) 
  \,=\, k^2 \int_D  \bigl( q_0(y)
  + q_1(y)|w_\eps(y)+\us_0(y)+\ui(y)|^2 \bigr) 
  \bigl(w_\eps(y)+\us_0(y)+\ui(y)\bigr)
  e^{-\rmi k \xhat\cdot y} \dy \,,
  \quad \xhat\in\Sone \,.
\end{equation*}
In \eqref{eq:ExaFPILinearProblem} and in each step of the fixed point
iteration \eqref{eq:ExaFPIUpdate} we have to solve a linear
Lippmann-Schwinger integral equation. 
For this purpose we use the simple cubature method from
\cite[Sec.~2]{Vai00}. 

Next we turn to the inverse scattering problem.  
We consider an equidistant grid of points
\begin{equation}
  \label{eq:Grid}
  \triangle
  \,=\, \{ \zij = (ih,jh) \;|\; 
  -J \leq i,j\leq J\} \,\tm\, [-R,R]^2
\end{equation}
with step size $h=R/J$ in the region of interest $[-R,R]^2$.
For each $\zij\in \triangle$ we approximate a solution to the
minimization problem
\begin{equation}
  \label{eq:MinimizationProblemFM}
  \text{Minimize } \;
  \biggl|\frac{\skp{F(g)}{g}_\LtwoSone}
  {\skp{g}{\phi_{\zij}}_\LtwoSone^2}\biggr| \;
  \text{ subject to } \;
  \|g\|_\LtwoSone=\rhotilde \;
  \text{ and } \; \skp{g}{\phi_{\zij}}_\LtwoSone\neq 0 
\end{equation}
for the nonlinear factorization method (see
Theorem~\ref{thm:NonlinearFactorization}), and 
\begin{equation}
  \label{eq:MinimizationProblemMM}
  \text{Minimize } \;
  \frac{\real\bigl(\skp{F(g)}{g}_\LtwoSone\bigr)}
  {|\skp{\phi_{\zij}}{g}_\LtwoSone|^2} \;
  \text{ subject to } \;
  \|g\|_\LtwoSone=\rhotilde \;
  \text{ and } \; \skp{g}{\phi_{\zij}}_\LtwoSone\neq 0
\end{equation}
for the nonlinear monotonicity method (see
Theorems~\ref{thm:NonlinearMonotonicity} and
Remark~\ref{rem:FMvsMM}). 

We use a composite trapezoid rule on an equidistant grid of points
\begin{equation}
  \label{eq:ObservationGrid}
  \{(\cos\phi_m,\sin\phi_m) \;|\;
  \phi_m=2\pi m/M \,,\; m=0,\ldots,M-1 \} \tm \Sone \,, \qquad M\in\N \,,
\end{equation}
to approximate the inner products in \eqref{eq:MinimizationProblemFM}
and \eqref{eq:MinimizationProblemMM}, and we discretize the densities
$g\in\Ltwo(\Sone)$ using a truncated Fourier series expansion 
\begin{equation}
  \label{eq:DFTg}
  g(\cos(t),\sin(t))
  \,=\, \sum_{n = -{N}/{2}}^{{N}/{2}-1} \ghat_n
  \frac{1}{\sqrt{2\pi}} e^{\rmi n t} \,, \qquad t\in[0,2\pi) \,,\;
  N/2\in\N \,.
\end{equation}
Accordingly, we minimize \eqref{eq:MinimizationProblemFM} and
\eqref{eq:MinimizationProblemMM} with respect to the finite
dimensional vector of Fourier coefficients
$[\ghat_{-N/2},\ldots,\ghat_{N/2-1}]^\trans\in \C^{N}$.
From our theoretical results in
Theorems~\ref{thm:NonlinearFactorization} and
\ref{thm:NonlinearMonotonicity} (see also Remark~\ref{rem:FMvsMM}), 
we expect the values of the minima in
\eqref{eq:MinimizationProblemFM} and \eqref{eq:MinimizationProblemMM}
to be close to zero when $z \in \Rtwo\setminus\ol{D}$, and
significantly larger than zero when $z\in D$.

In each grid point $\zij\in\triangle$ we approximate solutions of
\eqref{eq:MinimizationProblemFM} and \eqref{eq:MinimizationProblemMM}
using the interior point method provided by
Matlab's~\texttt{fmincon}. 
To find an appropriate initial guess $g_{ij}^{(0)}$ at each sampling
point $\zij \in \triangle$, we first perform a preliminary global
search and evaluate 
\begin{equation}
  \label{eq:StGuessFM}
  g_{ij}^{(0)}
  \,:=\, \argmin_{p,\ell,z}
  \biggl|\frac{\skp{F(g_{p,\ell,z})}{g_{p,\ell,z}}_\LtwoSone}
  {\skp{g_{p,\ell,z}}{\phi_{\zij}}_\LtwoSone^2}\biggr|
\end{equation}
for the optimization problem \eqref{eq:MinimizationProblemFM} and
\begin{equation}
  \label{eq:StGuessMM}
  g_{ij}^{(0)}
  \,:=\, \argmin_{p,\ell,z}
  \frac{\real\bigl(\skp{F(g_{p,\ell,z})}{g_{p,\ell,z}}_\LtwoSone\bigr)}
  {|\skp{\phi_{\zij}}{g_{p,\ell,z}}_\LtwoSone|^2}
\end{equation}
for the optimization problem \eqref{eq:MinimizationProblemMM}. 
Here, $g_{p,\ell,z}\in\LtwoSone$ is given by 
\begin{equation*}
  g_{p,\ell,z}(\cos(t), \sin(t))
  \,=\, \rhotilde\, \rmi^p
  \frac{1}{\sqrt{2\pi}}e^{\rmi \ell t}
  e^{-\rmi k (z_1\cos(t) + z_2\sin(t))} \,,
\end{equation*}
and the minimization in \eqref{eq:StGuessFM} and \eqref{eq:StGuessMM}
is over $p=0,1$, $\ell = -N/2,\ldots, N/2-1$, and
${z=(z_1,z_2)\in\triangle}$. 
The densities $g_{p,\ell,z}$ generate shifted Herglotz incident fields
$(Hg_\ell)(x-z)$, where~$g_\ell$ has just one active Fourier
mode. 

For each $\zij\in\triangle$ we denote the values of the final result
of the optimization by $\Ifac(\zij)$ for
\eqref{eq:MinimizationProblemFM} and $\Imon(\zij)$ for
\eqref{eq:MinimizationProblemMM}. 
Color coded plots of these indicator functions should give a
reconstruction of the support $D$ of the scattering object. 

\begin{example}
  \begin{figure}[t]
    \centering
    \includegraphics[height=5.2cm]{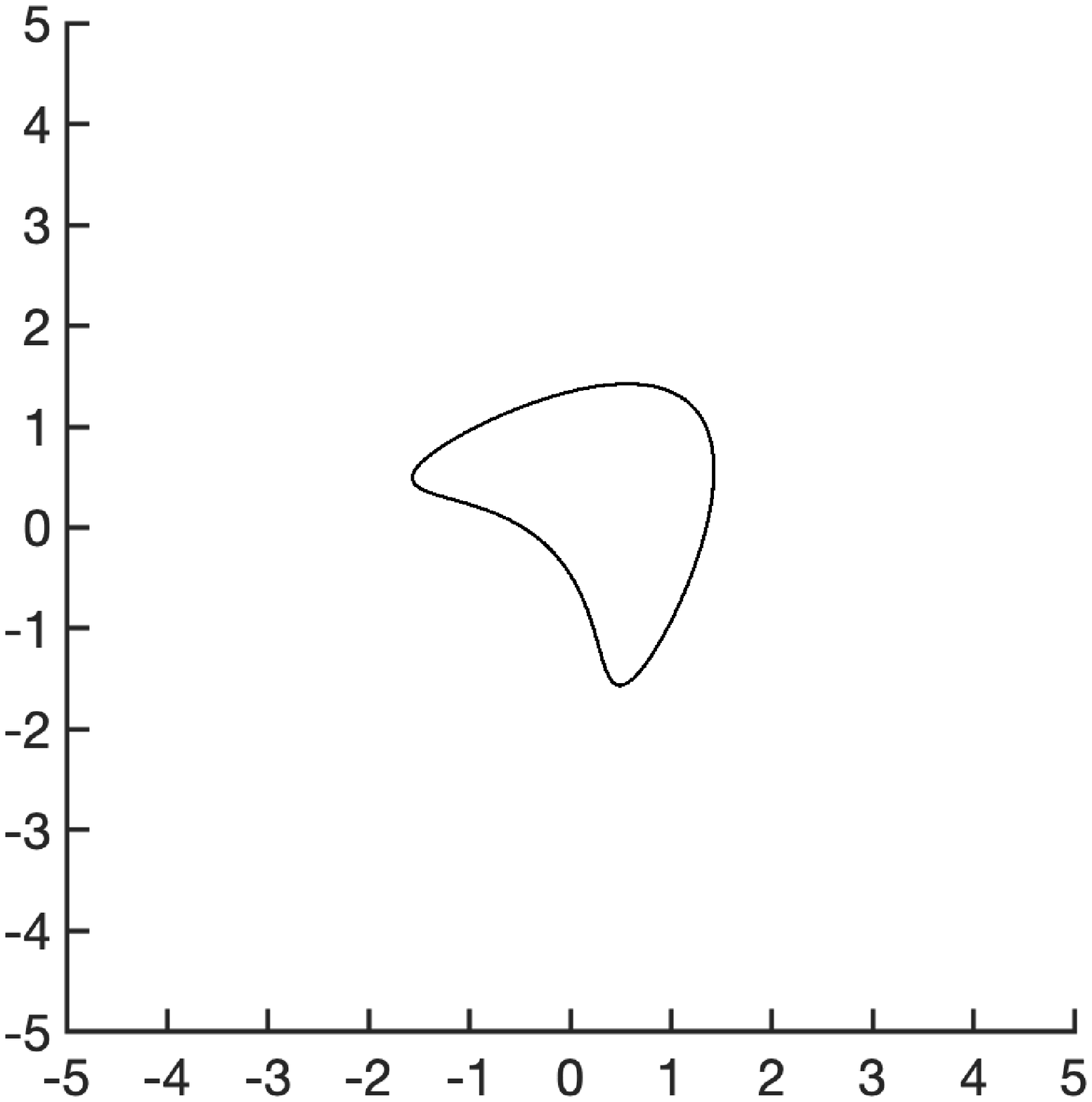}
    \includegraphics[height=5.2cm]{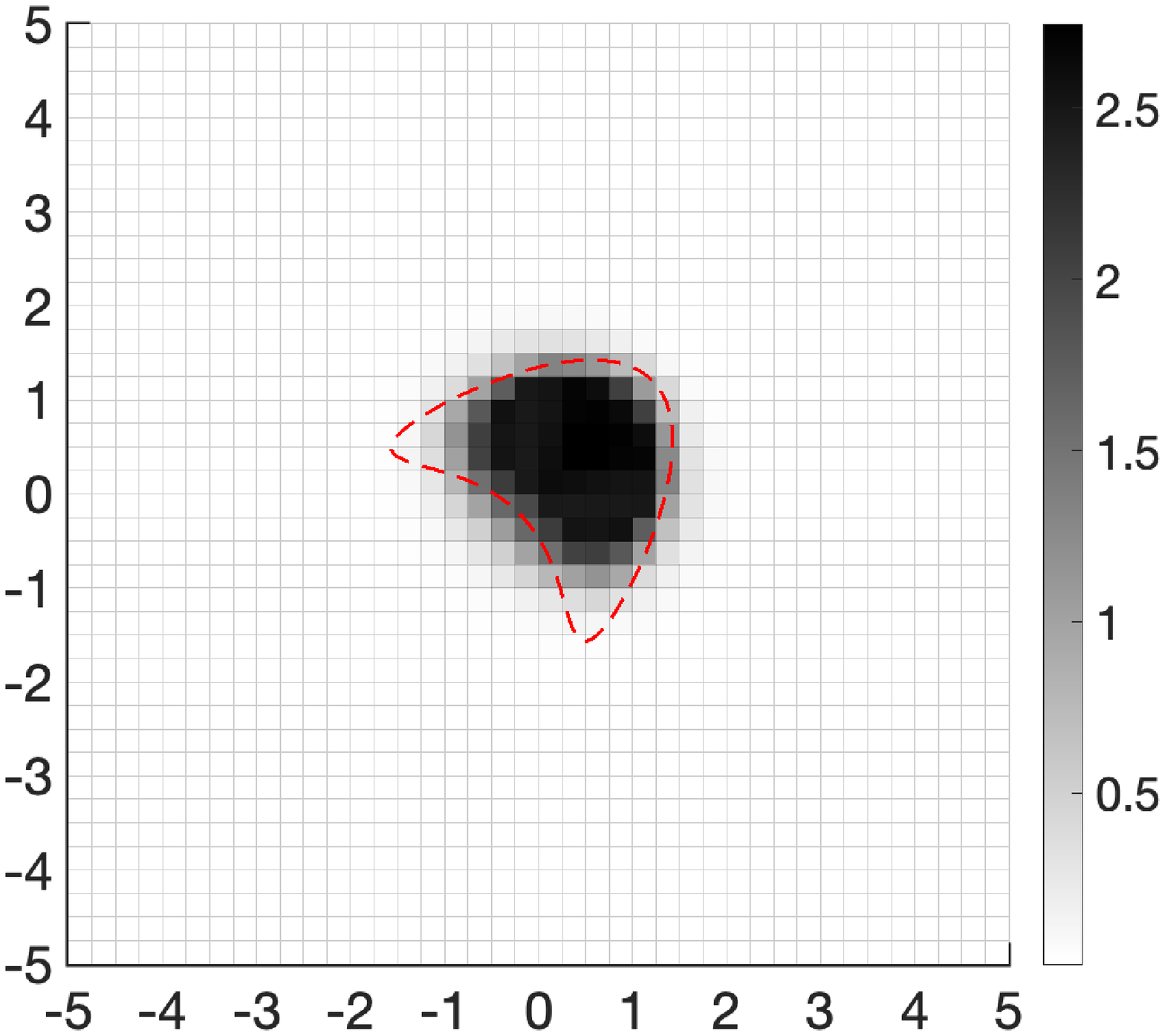}
    \includegraphics[height=5.2cm]{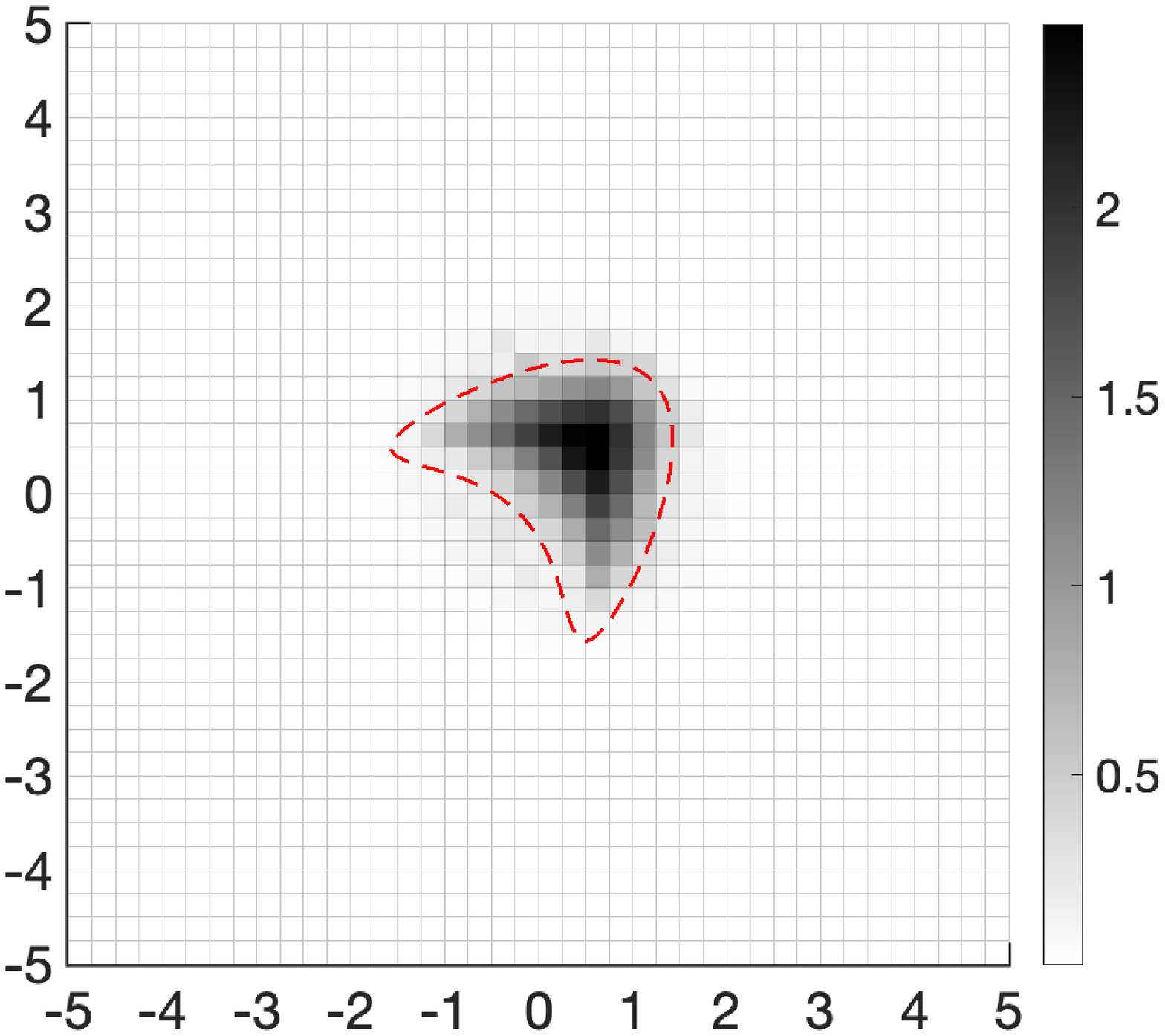}
    \caption{Nonlinear factorization method:
      Exact shape of the scattering object (left),
      initial guess $\Ifac^{(0)}$ (center), 
      final result $\Ifac$ (right).} 
    \label{fig:Exa1-1}
  \end{figure}
  
  \begin{figure}[t]
    \centering
    \includegraphics[height=5.2cm]{exa_geom}
    \includegraphics[height=5.2cm]{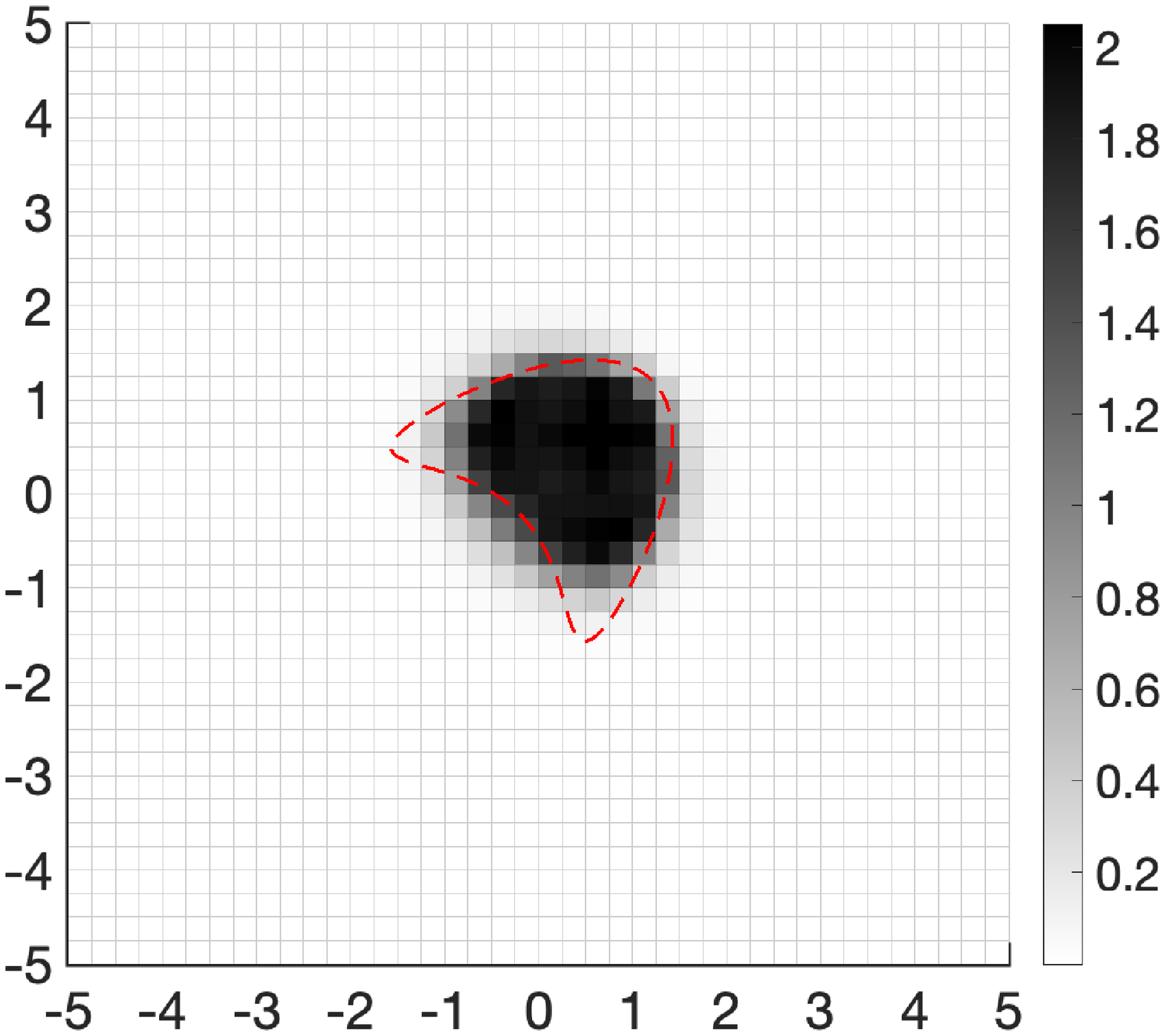}
    \includegraphics[height=5.2cm]{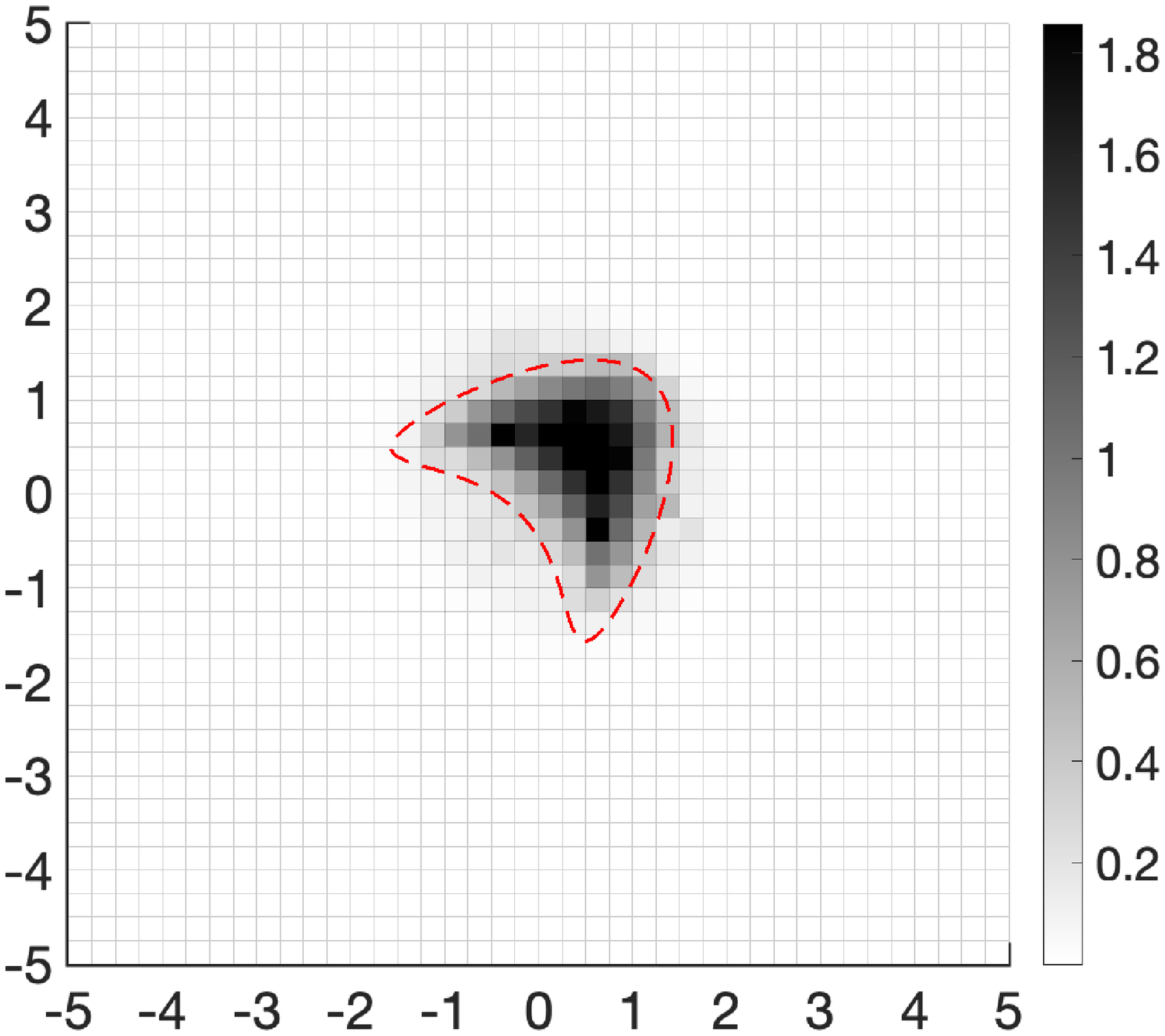}
    \caption{Nonlinear monotonicity method:
      Exact shape of the scattering object (left),
      initial guess $\Imon^{(0)}$ (center), 
      final result $\Imon$ (right).}
    \label{fig:Exa1-2} 
  \end{figure}
  
  We consider a kite shaped scattering object $D$ as shown in the left
  plot in Figure~\ref{fig:Exa1-1} and in the left plot in
  Figure~\ref{fig:Exa1-2}.  
  The coefficients in the Kerr-type nonlinear material law in
  \eqref{eq:ExaContrast} are determined to be
  \begin{equation*}
    q_0 \,=\,
    \begin{cases}
      1.16 &\text{in } D\,,\\
      0 &\text{in } \Rtwo\setminus \overline{D} \,,
    \end{cases}
    \qquad\text{and}\qquad
    q_1 \,=\,
    \begin{cases}
      2.5\cdot 10^{-22} &\text{in } D\,,\\
      0 &\text{in } \Rtwo\setminus \overline{D} \,.
    \end{cases}
  \end{equation*}
  These coefficients correspond to fused silica (see table~4.1.2 on
  p.~212 in \cite{Boy08} with $q_0 = n_0^2-1$ and $q_1 = \chi^{(3)}$). 
  For the wave number in the exterior we choose $k=1$, and the norm
  constraint in \eqref{eq:MinimizationProblemFM} and 
  \eqref{eq:MinimizationProblemMM} is set to
  $\rhotilde = 3.0\times 10^{10}$.
  
  A simple rescaling argument shows that we can equivalently work with
  \begin{equation*}
    u_{\mathrm{resc}} \,:=\, u/\tau \,,\quad
    q_{1,\mathrm{resc}} \,:=\, \tau^2 q_1 \,,\quad
    \text{and} \quad
    \rhotilde_{\mathrm{resc}} \,:=\, \rhotilde/\tau \qquad
    \text{for any } \tau>0 \,.
  \end{equation*}
  In the numerical implementation we choose
  $\tau = 3.0\times 10^{10}$, i.e.,
  $\widetilde{q_1}_{\mathrm{resc}} = 0.26 \, \eins_{D}$
  and~$\rhotilde_{\mathrm{resc}}=1$. 
  We use a sampling grid $\triangle$ as in \eqref{eq:Grid} with
  $R=5$ and $J=20$, i.e., the step size in each direction
  is~$h=0.25$. 
  Furthermore, we choose $M=256$ quadrature nodes in
  \eqref{eq:ObservationGrid}, $N=16$ Fourier modes in \eqref{eq:DFTg},
  and for the tolerance in \eqref{eq:FPITolerance} we
  choose~$\eps=10^{-5}$. 
  We compute the starting guess for the optimization
  \eqref{eq:MinimizationProblemFM} and
  \eqref{eq:MinimizationProblemMM} for each sampling point
  $\zij \in \triangle$ as in \eqref{eq:StGuessFM} or
  \eqref{eq:StGuessMM}.
  The corresponding values of the cost functional in
  \eqref{eq:MinimizationProblemFM} and
  \eqref{eq:MinimizationProblemMM} for each grid point
  $\zij\in\triangle$ are denoted by $\Ifac^{(0)}(\zij)$ and
  $\Ifac^{(0)}(\zij)$, respectively.
  Color coded plots of $\Ifac^{(0)}$ and $\Imon^{(0)}$ are shown
  in~Figure~\ref{fig:Exa1-1} (center) and Figure~\ref{fig:Exa1-2} 
  (center), respectively.
  These give already a reasonable reconstruction of the location of
  the nonlinear scattering object.
  The dashed lines indicate the exact geometry of the scatterer. 
  
  Then we approximate solutions to the optimization problems
  \eqref{eq:MinimizationProblemFM} and 
  \eqref{eq:MinimizationProblemMM} for each sampling point
  $\zij \in \triangle$ using Matlab's~\texttt{fmincon} algorithm.
  These approximations are denoted by~$\Ifac(\zij)$ and $\Imon(\zij)$, 
  respectively. 
  Color coded plots of the indicator functions $\Ifac$ and~$\Imon$ for 
  the nonlinear factorization method and for the nonlinear
  monotonicity method are shown in~Figure~\ref{fig:Exa1-1}~(right) and 
  Figure~\ref{fig:Exa1-2}~(right), respectively. 
  Again the dashed lines indicate the exact geometry of the scatterer. 
  The results obtained by the two methods are of similar quality.
  A significant improvement of the reconstruction is observed when
  compared to the initial guesses. 
  The shape of the support of the scattering object is nicely
  recovered.
  \hfill$\lozenge$
\end{example}

\section*{Conclusions}
\label{sec:Conclusions}
We have discussed a direct and inverse scattering problem for a class
of nonlinear Helmholtz equations in unbounded free space.
Assuming that the intensities of the incident waves are sufficiently
small relative to the size of the nonlinearity, we have established
existence and uniqueness of solutions to the direct and inverse
scattering problem. 
Our analysis relies on linearization techniques and estimates for the
linearization error. 
We have also considered extensions of two shape reconstruction
techniques for the inverse scattering problem, and we have provided
numerical examples.

\section*{Acknowledgments}
Funded by the Deutsche Forschungsgemeinschaft (DFG, German Research
Foundation) -- Project-ID 258734477 -- SFB 1173.

\begin{appendix}
  \section{Appendix: A useful estimate}
  In Lemma~\ref{lmm:UsefulEstimate} below we show a simple estimate
  that is used in the proof of
  Theorem~\ref{thm:UniquenessInverseProblem}, but that we have not 
  been able to find in the literature.  
  
    \begin{lemma}
    \label{lmm:UsefulEstimate}
    Let $a,b\in\C$ and $\alpha>0$.
    Then,
    \begin{equation*}
      \bigl| |a|^\alpha a - |b|^\alpha b \bigr|
      \,\leq\, 2 (|a|+|b|)^\alpha |a-b| \,. 
    \end{equation*}
  \end{lemma}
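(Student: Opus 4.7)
The plan is a two-step reduction: first to a real-valued inequality in the moduli, then a simple case split.

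By rotation invariance, both sides depend only on $r := |a|$, $s := |b|$, and the angle $\phi$ between $a$ and $b$. A direct expansion of the squared moduli yields
\begin{align*}
  \bigl||a|^\alpha a - |b|^\alpha b\bigr|^2
  &\,=\, (r^{\alpha+1}-s^{\alpha+1})^2 + 2 r^{\alpha+1} s^{\alpha+1} (1-\cos\phi) \,, \\
  |a-b|^2
  &\,=\, (r-s)^2 + 2rs(1-\cos\phi) \,.
\end{align*}
Since $1-\cos\phi \geq 0$, the squared target
$\bigl||a|^\alpha a - |b|^\alpha b\bigr|^2 \leq 4(r+s)^{2\alpha}|a-b|^2$
will follow from the two separate estimates
\begin{equation*}
  (r^{\alpha+1}-s^{\alpha+1})^2 \,\leq\, 4(r+s)^{2\alpha}(r-s)^2
  \qquad\text{and}\qquad
  (rs)^\alpha \,\leq\, 4(r+s)^{2\alpha} \,.
\end{equation*}
The second estimate is immediate from $4rs \leq (r+s)^2$ raised to the power $\alpha \geq 0$. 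So everything reduces to the real-valued statement
\begin{equation*}
  |r^{\alpha+1}-s^{\alpha+1}| \,\leq\, 2(r+s)^\alpha |r-s| \,, \qquad r, s \geq 0 \,.
\end{equation*}

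For this I would assume without loss of generality $r \geq s \geq 0$ and distinguish two regimes. If $s \geq r/2$, then $r+s \geq 3r/2$, so $(r+s)^\alpha \geq (3/2)^\alpha r^\alpha$, and the mean value theorem gives $r^{\alpha+1}-s^{\alpha+1} \leq (\alpha+1) r^\alpha (r-s)$; the claim then reduces to the elementary bound $\alpha+1 \leq 2(3/2)^\alpha$ for $\alpha \geq 0$. If instead $s < r/2$, then $r-s > r/2$, and
\begin{equation*}
  r^{\alpha+1}-s^{\alpha+1} \,\leq\, r^{\alpha+1} \,=\, 2 r^\alpha (r/2) \,<\, 2 r^\alpha (r-s) \,\leq\, 2(r+s)^\alpha (r-s) \,,
\end{equation*}
which is the desired estimate without any further input.

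The only mildly technical point is the scalar bound $\alpha+1 \leq 2(3/2)^\alpha$ for $\alpha \geq 0$; this is a quick one-variable calculus check, since the function $\alpha \mapsto 2(3/2)^\alpha - \alpha - 1$ equals $1$ at $\alpha = 0$, has a single interior critical point (with positive value, roughly $0.95$, near $\alpha \approx 0.5$), and grows exponentially as $\alpha \to \infty$. A straight-line, Wirtinger-based chain-rule argument would instead produce the factor $\alpha+1$ rather than $2$, which is strictly worse for $\alpha > 1$; this is precisely why the case split above—exploiting that $(r+s)^\alpha$ is already much larger than $\max(r,s)^\alpha$ when $r$ and $s$ are comparable—is needed.
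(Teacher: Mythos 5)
Your proof is correct, and it takes a genuinely different route from the paper. The paper normalizes by the larger modulus, sets $t=b/a$, and reduces the complex inequality to the real one $\bigl(1-|t|^{\alpha+1}\bigr)/(1-|t|)\leq 2(1+|t|)^\alpha$ by a monotonicity argument on the ratio $\bigl|1-|t|^\alpha t\bigr|^2/|1-t|^2$ as a function of $\real(t)$ (valid exactly in the nontrivial case $\bigl|1-|t|^\alpha t\bigr|>|1-t|$, which forces $\real(t)>0$); it then proves that real inequality purely algebraically, via the binomial theorem with $n=\lfloor\alpha\rfloor$ and the fractional part $\beta=\alpha-n$, using $(1+|t|)^n\geq\sum_{\ell=0}^n|t|^\ell$ and $2(1+|t|)^\beta\geq 1+|t|^\beta$. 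You instead pass to the moduli by expanding both squared norms in terms of $1-\cos\phi$ and comparing termwise — a clean observation, since both coefficients of $1-\cos\phi$ are nonnegative and the cross-term estimate $(rs)^\alpha\leq 4(r+s)^{2\alpha}$ is immediate — and you then prove the real inequality $|r^{\alpha+1}-s^{\alpha+1}|\leq 2(r+s)^\alpha|r-s|$ by the case split $s\geq r/2$ versus $s<r/2$, using the mean value theorem and the scalar bound $\alpha+1\leq 2(3/2)^\alpha$ (whose minimum-value check, about $0.95$ near $\alpha\approx 0.52$, is correct). The trade-off: your reduction to the real case is arguably more transparent than the paper's ratio-monotonicity step, while the paper's treatment of the real inequality is calculus-free, whereas yours leans on a one-variable convexity/critical-point check. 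Both arguments are complete and yield the same constant $2$.
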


  \begin{proof}    
    Without loss of generality we can assume that $|a|\geq |b|> 0$.
    Then $t:=b/a\in\C$ satisfies~$0<|t|\leq 1$, and
    we are left to show that
    \begin{equation}
      \label{eq:Inequality-t}
      \bigl| 1-|t|^\alpha t \bigr|
      \,\leq\, 2 (1+|t|)^\alpha |1-t| \,.
    \end{equation}
    
    If $\bigl| 1-|t|^\alpha t \bigr| \leq |1-t|$ or $|t|=1$,
    then \eqref{eq:Inequality-t} is clearly satisfied. 
    Hence, we assume from now on without loss of generality that
    $\bigl| 1-|t|^\alpha t \bigr| > |1-t|$ and $0<|t|<1$.
    This implies that~$0<\real(t)\leq|t|$, and accordingly
    \begin{equation*}
      \frac{\bigl| 1-|t|^\alpha t \bigr|^2}{|1-t|^2}
      \,=\, \frac{1-2|t|^\alpha\real(t)+|t|^{2\alpha+2}}{1-2\real(t)+|t|^2}
      \,\leq\, \frac{1-2|t|^{\alpha+1}+|t|^{2\alpha+2}}{1-2|t|+|t|^2}
      \,=\, \frac{\bigl( 1-|t|^{\alpha+1} \bigr)^2}{(1-|t|)^2} \,.
    \end{equation*}
    Therefore, it suffices to show that
    \begin{equation*}
      \frac{1-|t|^{1+\alpha}}{1-|t|} 
      \,\leq\, 2 (1+|t|)^\alpha \,.
    \end{equation*} 
    
    Let $n:=\lfloor\alpha\rfloor$ and $\beta:=\alpha-n$.
    Then,
    \begin{equation*}
      (1+|t|)^n 
      \,=\, \sum_{\ell=0}^n \binom{n}{\ell} |t|^\ell
      \,\geq\, \sum_{\ell=0}^n |t|^\ell
      \,=\, \frac{1-|t|^{n+1}}{1-|t|} \,,
    \end{equation*}
    and $2(1+|t|)^\beta\geq 1 + |t|^\beta$.
    Accordingly,
    \begin{equation*}
        2 (1+|t|)^\alpha
        \,=\, \frac{1 + |t|^\beta - |t|^{n+1} -|t|^{n+\beta+1}}{1-|t|}
        \,\geq\, \frac{1-|t|^{\alpha+1}}{1-|t|} \,.
    \end{equation*}
  \end{proof}
\end{appendix}

{\small
  \bibliographystyle{abbrv}
  \bibliography{biblio}
}

\end{document}